\colorlet{darkred}{red!40!black}
\newdelim{\ip}{\langle}{\rangle}
\newcommand{\leqnomode}{\tagsleft@true\let\veqno\@@leqno}
\newcommand{\reqnomode}{\tagsleft@false\let\veqno\@@eqno}
\newcommand{\mylabel}[2]{\def\@currentlabel{#2}\label{#1}}
\newcommand{\FL}{\mathcal{L}}
\begin{document}
\title[Dissipation enhancement of cellular flows]%
  {Dissipation enhancement of cellular flows in general advection diffusion equations}
\author[Feng]{Yu Feng}
\address{%
  Yu Feng, Beijing International Center for Mathematical Research, Peking University, No. 5 Yiheyuan Road Haidian District, Beijing, P.R.China 100871}
\email{fengyu@bicmr.pku.edu.cn}
\author[Xu]{Xiaoqian Xu}
\address{Xiaoqian Xu: Zu Chongzhi Center for Mathematics and Computational Sciences, Duke Kunshan University, China}
\email{xiaoqian.xu@dukekunshan.edu.cn}
\date{\today}

\begin{abstract}
The main contribution of this paper is twofold: (1) Recently, Iyer, Xu, and Zlatoš studied the dissipation enhancement by cellular flows based on standard advection-diffusion equations via a stochastic method. We generalize their results to advective hyper-diffusion equations and advective nonlinear diffusion equations. (2) We prove there exist smooth incompressible flows that are relaxation enhancing to hyper-diffusion but not to standard diffusion.
\end{abstract}
\maketitle

\section{Introduction}
Let $\T^d=[0,1]^d$ denotes the $d$ dimensional torus, $L_0^2$ be the mean-zero subspace of $L^2(\T^d)$ with the inner product $\langle\cdot ,\cdot\rangle$, and $u$ be a divergence-free vector field. Then we consider the following equation on $\T^d$:
\begin{equation}
\label{e:gen-adv-diff-eqn}
    \partial_t\theta + u\cdot\nabla\theta+\gamma\left(-\lap\right)^{\alpha}\theta=0,
\end{equation}
with $\alpha,\gamma>0$, and initial data $\theta(0,x)=\theta_0\in L_0^2$. In particular, for $\alpha=1$, it gives the standard advection-diffusion equation
\begin{equation}
\label{e:adv-diff-eqn}
    \partial_t\theta + u\cdot\nabla\theta-\gamma\lap\theta=0.
\end{equation}
For $\alpha=2$, \eqref{e:gen-adv-diff-eqn} becomes the advective hyper-diffusion equation
\begin{equation}
\label{e:adv-hyperdiff-eqn}
    \partial_t\theta + u\cdot\nabla\theta+\gamma\lap^2\theta=0.
\end{equation}
The dissipation enhancement effect has been studied extensively in recent decades. Most of these research is carried out based on \eqref{e:adv-diff-eqn}, and can be generalized to \eqref{e:gen-adv-diff-eqn} for $\alpha>1$ without any difficulty. However, the results obtained from the maximal principle or stochastic process associated with \eqref{e:adv-diff-eqn} cannot be generalized to \eqref{e:gen-adv-diff-eqn} trivially (for example \cites{fannjiang2006quenching,novikov2007boundary,iyer2021convection,iyer2022quantifying}). In this paper, we show the dissipation enhancement result obtained via a stochastic method in \cite{iyer2021convection} can be generalized to \eqref{e:gen-adv-diff-eqn} and nonlinear diffusion equations, in which the stochastic structure is lost. 

To begin with, we define the \emph{dissipation time} associate to \eqref{e:gen-adv-diff-eqn}.
\begin{definition}
\label{def:dissipationTime}
Let $\mathcal S_{s, t}^{u,\alpha}$ be the solution operator to \eqref{e:gen-adv-diff-eqn} on $(0,\infty)\times\T^d$, that is, for any function $\theta_s(x)\in L^2_0$, $S_{s,t}^{u,\alpha}\theta_s(x)$ solves \eqref{e:gen-adv-diff-eqn} with initial data $\theta(s,x)=\theta_s(x)$ on $\mathbb{T}^d$. The \emph{dissipation time} of $u$ is defined by
  \begin{equation}
  \label{e:dtimeDef}
    \tau_{\alpha}^*(u,\gamma) \defeq  \inf \set[\Big]{ t \geq 0 \st  \norm{ \mathcal S_{s, s+t}^{u,\alpha} }_{L_0^2 \to L_0^2 } \leq \tfrac{1}{2} \text{ for all $s\geq 0$} }\,.
  \end{equation}
\end{definition}
Recent researches indicate that the flows with small dissipation time can be used to suppress the formation of singularities in the nonlinear PDEs (see \cites{iyer2021convection,feng2020phase,feng2022global,feng2022suppression}). In the second-order differential equations, such as the Keller-Segel equations and the ignition type reaction-diffusion equations, one needs to provide flows such that $\tau_1^{*}(u,\gamma)$ is small enough \cites{iyer2021convection}. And for the fourth order differential equations, such as Cahn-Hilliard equation \cite{feng2020phase}, Kuramoto-Sivashinsky equation \cite{feng2022global} and thin-film equation \cite{feng2022suppression}, one need to provide flows such that $\tau_2^{*}$ is sufficiently small (see Section \ref{sec:Applications to reaction hyper-diffusion equations} for more details). Therefore, it is meaningful to construct flows with arbitrarily small dissipation times for either $\alpha=1$ or $2$.

In the seminal paper \cite{ConstantinKiselevEA08}, the main result interpret that given a time-independent velocity field $u$, $\tau^{*}_{1}(Au,\gamma)$ tends to zero as $A$ tends to infinity if and only if the operator $u\cdot\nabla$ has no eigenfunctions in $H^{1}(\T^d)$ rather than constants. A typical class of such flows is the weakly mixing flows, for which $u\cdot\nabla$ only has continuous spectrums. The proof of \cite{ConstantinKiselevEA08} relies on the so-called RAGE theorem, as a consequence, contains no quantitative information on the appearance of a faster time scale induced by $u$. After that, the authors in \cites{FengIyer19,CotiZelatiDelgadinoEA18} obtain explicit 
bounds on $\tau_1^{*}(u,\gamma)$, by requiring the \emph{mixing rate} of a mixing flow $u$, associated to the underlying transport equation (see \cite{thiffeault2012using}). 
Here the intuition is based on the fact that the mixing flows create small-scale structures (or high frequencies). And the faster this process happens, the faster the energy of solution to \eqref{e:gen-adv-diff-eqn} is dissipated and, as a result, the smaller $\tau_{1}^*(u,\gamma)$ becomes. However, known examples of such flows are either complicated or not regular enough. 

Fortunately, many smooth flows are not mixing but have small dissipation times, thus enough to suppress singularities in nonlinear PDEs. For such flows, the dissipation times do not tend to zero as one increases the amplitude of the flow. One typical class of such flows is cellular flows (see \cites{iyer2021convection,kiselev2001enhancement} for applications). A prototypical example of cellular flow in two dimensions is given by
\begin{equation}\label{eq:cell}
    u(x)=\grad^{\perp}\sin(2\pi x_1)\sin(2\pi x_2).
\end{equation}
Similar to the example above, all two-dimensional cellular flows have closed trajectories and hence are not mixing. In \cite{iyer2021convection}, based on equation \eqref{e:adv-diff-eqn}, the authors proved that the dissipation time, $\tau^*_1$, of such flows could be made arbitrarily small by rescaling the spacial scale and the flow amplitude at the same time. Their proof is based on establishing the relationship between the dissipation time $\tau_1(u,1)$ and the \emph{effective diffusivity}, denote as $D(u)$ (see Section \ref{sec:pre} for a more precise definition). For convenience to the reader, we cite their theorem here.
\begin{theorem}\label{thm:cellular for tau1}
For each $m\in \N$, let $u_m\in W^{1,\infty}(\T^d)$ be a mean zero divergence free velocity field that is symmetric (condition \eqref{e:symmetric condition} holds) in all coordinates. If the effective diffusivity $D(u_m)$ satisfies $\lim\limits_{m\rightarrow\infty} D(u_m)=\infty$, then there exists $l_m\in \mathbb{N}$, such that the rescaled velocity fields $v_m(x)\defeq-l_m u_m(l_m x)$ satisfy
\begin{equation}\label{eq:difftime}
    \lim_{m\rightarrow\infty}\tau_1^{*}(v_m,\gamma)=0.
\end{equation}

In particular, for $d\in\left\{2,3\right\}$ and any $\sigma>0$, there exists a smooth cellular flow $u$ on $\T^d$ such that $\tau_1^{*}(u,\gamma)\leqslant \sigma$.
\end{theorem}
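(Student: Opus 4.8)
The strategy I would follow is the one that connects the dissipation time to the effective diffusivity through homogenization. The key observation is that the rescaling $v_m(x) = -l_m u_m(l_m x)$ is precisely the homogenization scaling: writing $\epsilon = 1/l_m$, the field is $v_m(x) = -\epsilon^{-1} u_m(x/\epsilon)$, so the associated advection-diffusion equation $\partial_t \theta + v_m \cdot \grad \theta - \gamma \lap \theta = 0$ has coefficients oscillating on scale $\epsilon$ on the fixed torus $\T^d$. Standard homogenization theory then tells us that, as $l_m \to \infty$, the solution operator $\mathcal S_{s,s+t}^{v_m,1}$ converges to the constant-coefficient heat semigroup $e^{t\,\mathrm{div}(D\grad)}$, where $D$ is the effective-diffusivity matrix obtained from the cell problem for $u_m$ at diffusivity $\gamma$. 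Because each $u_m$ is symmetric in all coordinates, $D$ is a scalar multiple of the identity, $D = D(u_m) I$, and the limiting equation is the heat equation with diffusivity $D(u_m)$.

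The second ingredient is an exact computation of the dissipation time of the homogenized limit. On $L_0^2(\T^d)$ the smallest nonzero eigenvalue of $-\lap$ is $4\pi^2$, so the heat semigroup satisfies $\norm{e^{t D(u_m)\lap}}_{L_0^2 \to L_0^2} = e^{-4\pi^2 D(u_m) t}$; its dissipation time is therefore $T_m \defeq (\log 2)/(4\pi^2 D(u_m))$, and the hypothesis $D(u_m) \to \infty$ forces $T_m \to 0$. A crucial structural point is that $v_m$ is time-independent, so both $\mathcal S_{s,s+t}^{v_m,1}$ and its homogenized limit depend only on the elapsed time $t$ and not on the base time $s$; this is exactly what is needed to control the supremum over $s$ in the definition of $\tau_1^*$.

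The two facts are then combined by a diagonal argument to choose the integers $l_m$. For each fixed $m$ I would fix a target time slightly larger than $T_m$, say $t_m = 2T_m$, at which the homogenized operator already has norm well below $\tfrac12$; then, using the homogenization convergence, I select $l_m$ large enough that $\norm{\mathcal S_{s,s+t_m}^{v_m,1}}_{L_0^2\to L_0^2} \le \tfrac12$ uniformly in $s$. This yields $\tau_1^*(v_m,\gamma) \le t_m = 2T_m \to 0$. The main obstacle is quantitative: the target time $t_m$ shrinks to $0$ as $m$ grows, while homogenization error estimates typically deteriorate at short times and high frequencies, so one must verify that the convergence of the solution operator is strong enough to beat the shrinking time scale. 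Handling this genuinely two-parameter limit — letting $l_m \to \infty$ fast enough relative to the growth of $D(u_m)$ — is the technical heart of the proof, and it is what forces the appeal to effective diffusivity rather than to a soft compactness or RAGE-type argument.

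Finally, the ``in particular'' statement follows by exhibiting explicit smooth symmetric cellular flows with unbounded effective diffusivity. In two dimensions one takes the amplitude-$A$ rescalings of the prototype $\grad^\perp \sin(2\pi x_1)\sin(2\pi x_2)$; classical cellular-flow asymptotics give $D \sim \sqrt{A}$, so letting $A = A_m \to \infty$ produces a sequence $u_m$ with $D(u_m) \to \infty$, and an analogous family of symmetric cellular flows exists for $d=3$. Applying the first part yields $\tau_1^*(v_m,\gamma) \to 0$; choosing $m$ so large that $\tau_1^*(v_m,\gamma) \le \sigma$ and noting that $v_m$ inherits smoothness from the prototype completes the proof.
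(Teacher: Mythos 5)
Be aware first that this paper does not prove Theorem \ref{thm:cellular for tau1} at all: it is quoted verbatim from \cite{iyer2021convection}, where the proof is probabilistic, built on the SDE \eqref{e:sde}, the exit-time/variance characterization of $D(u)$, and a coupling-type argument that controls the transition density of $X_t^x$ uniformly in the starting point --- which is what delivers a bound on $\| \mathcal S^{v_m,1}_{s,s+t}\|_{L^2_0\to L^2_0}$ rather than a statement about individual solutions. Your homogenization route is therefore genuinely different in method, and it is the right heuristic (the effective diffusivity \emph{is} the homogenized matrix, and the scaling $v_m(x)=-l_m u_m(l_m x)$ is exactly the homogenization scaling), but it has a real gap at its central step, which you flag but do not close. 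Qualitative homogenization gives convergence of $\mathcal S^{v_m,1}_{0,t}\theta_0$ to $e^{tD\Delta}\theta_0$ for each \emph{fixed} initial datum $\theta_0$ and fixed $t>0$; it does not give convergence in the operator norm on $L^2_0$, which is what the definition \eqref{e:dtimeDef} requires. The supremum over the unit ball of $L^2_0$ includes data oscillating at every scale, in particular at scales comparable to the cell size $1/l_m$, where the homogenized description is simply wrong at the times you care about. One cannot dismiss such data by pointing to the energy identity $\frac{d}{dt}\|\theta\|_{L^2}^2=-2\gamma\|\theta\|_{\dot H^1}^2$, because the advection can transfer energy back to low frequencies before the diffusion has acted; controlling exactly this exchange, uniformly over data and at the shrinking time scale $t_m\sim 1/D(u_m)$, is the entire content of the theorem. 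In the probabilistic proof this is handled by uniform (in $x$) estimates on the law of $X_{t_m}^x$; in a PDE proof one would need a \emph{quantitative} operator-norm homogenization estimate with an explicit rate in $l_m$ that can be made to beat $t_m\to 0$ --- a nontrivial result that you assert rather than establish. As written, the "select $l_m$ large enough that $\|\mathcal S^{v_m,1}_{s,s+t_m}\|_{L^2_0\to L^2_0}\le \tfrac12$" step is therefore unjustified.

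Two smaller points. Symmetry of $u_m$ in each coordinate (condition \eqref{e:symmetric condition}) forces the effective matrix to be diagonal, not a multiple of the identity; this is harmless because $D(u)$ is defined as the minimum of the diagonal entries, but the limiting semigroup should be written with the diagonal matrix rather than $D(u_m)I$. The "in particular" part is fine: the $D\sim\sqrt{A}$ asymptotics for the amplified flow \eqref{eq:cell} (Childress, Fannjiang--Papanicolaou) and its three-dimensional analogue are exactly what \cite{iyer2021convection} uses, and your deduction from the first part is correct once the first part is secured.
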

\begin{remark}
\label{rmk:23dimension dissipation}
Based on the example $(5.9)$ in \cite{iyer2021convection}, for instance, when $d=2$, one can choose $v_m=m^{2+\alpha}u(mx)$, with $\alpha=\frac{15}{113}$, where $u(x)$ is given by \eqref{eq:cell}, so that \eqref{eq:difftime} holds. Similar examples exist in three dimensions. In the recent paper \cite{iyer2022quantifying},  by constructing a successful coupling, Iyer and Zhou improve this result by providing quantitative bounds on the dissipation enhancement in terms of the flow amplitude, cell size, and diffusivity.
\end{remark}
Both the proofs in \cite{iyer2021convection} and \cite{iyer2022quantifying} rely on the stochastic processes associated to equation \eqref{e:adv-diff-eqn}, and therefore cannot be generalized to \eqref{e:gen-adv-diff-eqn} directly. On the other hand, in \cite{feng2020phase} the authors derived an upper bound of $\tau_2^{*}(u,\gamma)$ in terms of $\tau_1^{*}(u,\gamma)$ for any smooth divergence-free vector field. We cite their result here.
\begin{lemma}\label{lem:relation btw tau_1and2}
There exists an explicit dimensional constant $C$ such that for every divergence free $u\in L^{\infty}([0,\infty);C^2(\T^d))$, and every $\gamma>0$, we have
\begin{equation}\label{eqn:relation btw tau_1and2}
    \tau_2^{*}(u,\gamma)\leqslant C\tau_1^{*}(u,\gamma)(1+\norm{u}_{C^2}\tau_1^{*}(u,\gamma)).
\end{equation}
\end{lemma}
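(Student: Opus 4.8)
The plan is to reduce the inequality \eqref{eqn:relation btw tau_1and2} to a single contraction estimate for the hyper-diffusion semigroup over one time window, feeding in the decay of the standard flow \eqref{e:adv-diff-eqn} as the only external input. Two elementary facts align the two equations. The energy identity for \eqref{e:adv-hyperdiff-eqn} is $\frac{d}{dt}\norm{\theta}^2=-2\gamma\norm{\lap\theta}^2$, while for \eqref{e:adv-diff-eqn} it is $\frac{d}{dt}\norm{\theta}^2=-2\gamma\norm{\nabla\theta}^2$; and since the smallest nonzero eigenvalue of $-\lap$ on $\T^d$ equals $4\pi^2$, one has the spectral inequality $\norm{\lap\theta}^2\ge 4\pi^2\norm{\nabla\theta}^2$ on $L^2_0$. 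Thus, at equal viscosity and evaluated at the same state, the hyper-diffusion dissipation rate dominates the standard one by a factor $4\pi^2$. If the dissipation time were monotone in the dissipation operator this would already give $\tau_2^*(u,\gamma)\le\tau_1^*(u,4\pi^2\gamma)$; the obstruction, and the source of the correction term, is that the two semigroups run along different trajectories, so such a comparison of quadratic forms is not available.

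Fixing a window $[s,s+T]$, I would launch both flows from the same datum $\theta_s\in L^2_0$, writing $\theta_2=\mathcal S^{u,2}_{s,\cdot}\theta_s$ for the hyper-diffusion solution and $\phi=\mathcal S^{u,1}_{s,\cdot}\theta_s$ for the standard one. Choosing $T$ to be a fixed multiple of $\tau_1^*(u,\gamma)$, the definition of the dissipation time gives $\norm{\phi(s+T)}\le\tfrac12\norm{\theta_s}$, which by the standard energy identity is equivalent to the lower bound $2\gamma\int_s^{s+T}\norm{\nabla\phi}^2\,dr\ge\tfrac34\norm{\theta_s}^2$ on the integrated enstrophy of the diffusion flow. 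In view of the hyper-diffusion energy identity and the spectral inequality, the desired contraction $\norm{\theta_2(s+T)}\le\tfrac12\norm{\theta_s}$ would follow from the matching lower bound $8\pi^2\gamma\int_s^{s+T}\norm{\nabla\theta_2}^2\,dr\ge\tfrac34\norm{\theta_s}^2$. Everything therefore reduces to transferring the enstrophy lower bound from the diffusion trajectory $\phi$ to the hyper-diffusion trajectory $\theta_2$.

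To carry out this transfer I would compare the two integrated enstrophies directly, exploiting that $\theta_2$ and $\phi$ solve the same transport equation and differ only in their dissipation term. Testing the equation for the difference against suitable multipliers and controlling $\frac{d}{dt}\norm{\nabla\theta_2}^2$, the transport contribution cancels after one integration by parts while the commutator $[u\cdot\nabla,\lap]$ produces the terms $2\nabla u\!:\!\nabla^2\theta+(\lap u)\cdot\nabla\theta$, of size $O(\norm{u}_{C^2})$; the stronger hyper-dissipation is of favourable sign and is absorbed, and the singular initial layer is removed by inserting a time weight that vanishes at $r=s$. The outcome I expect is that $\gamma\int_s^{s+T}\norm{\nabla\theta_2}^2$ differs from $\gamma\int_s^{s+T}\norm{\nabla\phi}^2$ by at most a bounded factor plus an error of order $\norm{u}_{C^2}T\,\norm{\theta_s}^2$. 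The factor $4\pi^2$ of slack in the spectral inequality is meant to absorb the bounded-factor loss, while requiring the $\norm{u}_{C^2}T$ error to stay below a fixed fraction of $\norm{\theta_s}^2$ forces $T\sim\tau_1^*(1+\norm{u}_{C^2}\tau_1^*)$; combined with the iteration property of the dissipation time this yields \eqref{eqn:relation btw tau_1and2}.

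The main obstacle is precisely this enstrophy transfer. Because the bi-Laplacian over-damps high frequencies, the hyper-diffusion trajectory may carry strictly less enstrophy than the diffusion one, and the two trajectories separate over a full dissipation time, so there is no pointwise or semigroup comparison to lean on; one must show quantitatively that $\int\norm{\nabla\theta_2}^2$ cannot fall below a fixed fraction of $\int\norm{\nabla\phi}^2$, and it is here that the factor $\norm{u}_{C^2}$ and the quadratic correction in $\tau_1^*$ genuinely enter. Subsidiary difficulties are choosing the time weight so that the initial layer is removed without spoiling the absorption of the dissipation terms, and tracking every constant to keep it dimensional and independent of $\gamma$.
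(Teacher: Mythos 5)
First, a point of reference: the paper does not prove this lemma at all --- it is imported verbatim from \cite{feng2020phase} --- so your attempt can only be measured against the argument there. Your reduction is set up correctly: the energy identities, the spectral inequality $\norm{\lap\theta}_{L^2}^2\ge 4\pi^2\norm{\grad\theta}_{L^2}^2$ on $L^2_0$, and the fact that $2\gamma\int_s^{s+\tau_1^*}\norm{\grad\phi}_{L^2}^2\,dr\ge\tfrac34\norm{\theta_s}_{L^2}^2$ are all right, and you have correctly isolated the enstrophy transfer as the crux. But that step is only described, not proved, and the mechanism you sketch cannot deliver it. Computing directly, $\tfrac12\tfrac{d}{dt}\norm{\grad\theta_2}_{L^2}^2=-\langle(\grad u)\grad\theta_2,\grad\theta_2\rangle-\gamma\norm{\theta_2}_{\dot H^3}^2$: the hyper-dissipation term is ``of favourable sign'' only for an \emph{upper} bound on the enstrophy, whereas you need a \emph{lower} bound, for which $-\gamma\norm{\theta_2}_{\dot H^3}^2$ has the wrong sign and cannot be absorbed ($\norm{\theta}_{\dot H^3}^2/\norm{\grad\theta}_{L^2}^2$ is unbounded). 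Indeed the bi-Laplacian can annihilate essentially all the enstrophy of high-frequency data in time $o(1)$ while the Laplacian does not, so an estimate of the form $\int\norm{\grad\theta_2}^2\ge c\int\norm{\grad\phi}^2-\mathrm{err}$ is false without first splitting into a high-frequency regime (where one instead gets $L^2$ decay directly from $\norm{\lap\theta_2}^2\ge\lambda\norm{\theta_2}^2$) and a low-frequency regime; moreover any difference estimate for $w=\theta_2-\phi$ carries the source $-\gamma\lap(\lap+I)\theta_2$, whose pairing requires control of $\int\norm{\theta_2}_{\dot H^3}^2$ or $\int\norm{\lap\phi}_{L^2}^2$, i.e.\ genuine parabolic smoothing estimates (this, via $\tfrac12\tfrac{d}{dt}\norm{\grad\phi}^2+\gamma\norm{\lap\phi}^2\le\norm{\grad u}_{L^\infty}\norm{\grad\phi}^2$, is where $\norm{u}_{C^2}$ actually enters), none of which appear in your sketch.

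The second, structural problem is your bookkeeping of the error. You posit an error of order $\norm{u}_{C^2}T\norm{\theta_s}^2$ and then claim that requiring it to be a small fraction of $\norm{\theta_s}^2$ ``forces $T\sim\tau_1^*(1+\norm{u}_{C^2}\tau_1^*)$''. It does the opposite: it forces $T\lesssim\norm{u}_{C^2}^{-1}$, an upper bound on the window length that is incompatible with $T\ge\tau_1^*$ precisely in the regime $\norm{u}_{C^2}\tau_1^*\gg1$ where the quadratic term matters; enlarging $T$ makes a linearly growing error worse, not better. The quadratic term in \eqref{eqn:relation btw tau_1and2} does not come from one long window. The argument of \cite{feng2020phase} works on windows of length $\tau_1^*$, shows that on each such window the hyper-diffusive energy either contracts by a fixed factor (high-frequency case) or drops by a relative amount of order $(1+\norm{u}_{C^2}\tau_1^*)^{-1}$ (low-frequency case, by comparison with $\phi$ plus smoothing), and then iterates $O(1+\norm{u}_{C^2}\tau_1^*)$ times to accumulate a factor $\tfrac12$. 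Your single-window scheme has no mechanism to generate this structure, so the central estimate remains unproved and, as stated, unprovable.
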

Once velocity fields with small $\tau_1^{*}(u,\gamma)$ are known, one can use Lemma \ref{lem:relation btw tau_1and2} to produce velocity fields for which $\tau_2^{*}(u,\gamma)$ is small. For instance, if $u$ is mixing at a sufficiently fast rate, then the results of Wei\cite{wei2021diffusion}, Coti Zelati et al.\cite{CotiZelatiDelgadinoEA18}, Feng and Iyer\cite{FengIyer19} along with Lemma \ref{lem:relation btw tau_1and2} can be used to produce velocity fields for which $\tau_2^{*}(u,\gamma)$ is arbitrarily small. However, Lemma \ref{lem:relation btw tau_1and2} is not tight enough to produce arbitrary small $\tau_2^{*}(u,\gamma)$ for cellular flows. Indeed, with the $\tau_1^*$ bound of cellular flows in \cite{iyer2021convection} and \cite{iyer2022quantifying}, the right-hand side of \eqref{eqn:relation btw tau_1and2} blows up as $\norm{u}_{C^2}$ increases.

There are two main contributions of this paper. Firstly, we show that cellular flows indeed can be used to produce $\tau_{\alpha}^{*}(u,\gamma)$ arbitrarily small for any $\alpha\geqslant 1$, that is Theorem \ref{thm:cellular for tau1} remains true if we replace $\tau_1^*$ by $\tau_{\alpha}^*$ with $\alpha>1$, the formal conclusion is established in Theorem \ref{thm:main}. As a byproduct, we also show that cellular flows can be used to enhance dissipation in nonlinear diffusion equations, such as porous medium equations and p-Laplacian equations (see Theorem \ref{e:adv p-laplacian} and Theorem \ref{thm:RH for plap}). In the second part of this paper, we prove that there exist smooth incompressible flows that are relaxation enhancing with respect to the advective hyper-diffusion equation but not to the standard advection-diffusion equation (Proposition \ref{prop:RH flows}).

\section{Preliminary}\label{sec:pre}
We devote this section to introduce the notations and establish some elementary estimates.

For a function $f(x)$ in defined in $\T^d$, we define the homogeneous $L^2$-Sobolev norm $\dot{H}^s$ ($s\neq 0$) to be
$$
\|f\|_{\dot{H}^s}=\left(\sum\limits_{|n|\neq 0}|\hat{f}(n)|^2|n|^{2s}\right)^{\frac{1}{2}},
$$
where $\hat{f}(n)$ is the $n^{\text{th}}$ Fourier coefficient of $f$.

We say that a flow $u$ is \emph{symmetric} in $x_n$ if we have
\begin{equation}
\label{e:symmetric condition}
    u(R_n x)=R_n u(x)\qquad\text{for all }x\in\T^d.
\end{equation}
Here $R_n v\defeq v-2v_n e_n$ for $v=\sum\limits_{n=1}^{d}v_n e_n\in\R^d$ is the reflection in the $n^{\text{th}}$ coordinate. A periodic flow that is symmetric in all $d$ coordinates is said to have a cellular structure and we call it a cellular flow.

Consider \eqref{e:adv-diff-eqn} on $\R^d$, with initial data $\theta_0$, and let $u$ be time-independent, mean-zero, divergence-free, and Lipschitz continuous vector fields. For simplicity, we assume $\gamma=1$ from now on. Define the stochastic process $X_t^x(\omega)$:
\begin{equation}\label{e:sde}
    dX_t^x=\sqrt{2}dB_t-u(X_t^x)dt,\quad X_0^x=x,
\end{equation}
where $B_t$ is a normalized Brownian motion on $\R^d$ with $B_0=0$, defined on some probability space $(\Omega,\mathcal{B}_{\infty},\mathbb{P}_{\Omega})$. Let $G_t(x,y)$ denotes the fundamental solution to \eqref{e:adv-diff-eqn} and $\mathbb{E}_{\Omega}$ the expectation with respect to $\omega\in\Omega$, then the corresponding solution to \eqref{e:adv-diff-eqn} can be written as $\theta(t,x)=\int_{\R^d}G_t(x,y)\theta_0(y)dy=\mathbb{E}_{\Omega}(\theta_0(X_t^x))$. Then we define the $\emph{effective diffusivity}$ of $u$ in the direction $e$ by
\begin{equation}
    D_e(u)\defeq\lim_{t\rightarrow\infty}\mathbb{E}_{\Omega}\left(\frac{\abs{\left(X_t^x-x\right)\cdot e}^2}{2t}\right)\qquad(\geqslant 1),
\end{equation}
with the limit being independent of $x\in\R^d$. In addition, let 
\begin{equation}
    D(u)\defeq\min\left\{D_{e_1}(u),\dots,D_{e_d}(u)\right\}\geqslant 1
\end{equation}
denotes the minimum of effective diffusivities in all the coordinate directions (see \cite{Zlatos11} for more details). Intuitively, $D_e(u)$ is the average distance a point can go along with the process \eqref{e:sde}.

Let $\FL_m\defeq iu(m x)\cdot\grad$ with $u$ divergence free, note that $\FL_m$ is a self-adjoint operator with respect to $\langle\cdot ,\cdot\rangle$. Let $P_c$ and $P_p$ be the spectral projection on its continuous and discrete spectral subspace correspondingly. On the other hand, we denote by $0<\lambda_1\leqslant\lambda_2\leqslant\dots$ the eigenvalues of the operator $-\lap$ on the torus, and $P_N$ be the orthogonal projection on the subspace spanned by its first $N$ eigen-modes, and by $S=\left\{\theta\in L^2:\norm{\theta}_{L^2}=1\right\}$ the unit sphere in $L^2$.
For the convection field with an amplitude $A>0$,  equation \eqref{e:gen-adv-diff-eqn} can be represented in the following way:
\begin{equation}
\label{e:Aform1.1}
    \frac{d}{dt}\theta_m^{A}(t)=i A\FL_m\theta_m^{A}(t)-(-\lap)^{\alpha}\theta_m^A(t),
\end{equation}
with initial date $\theta_m^{A}(0)=\theta_0$. Here we assume $\alpha\geq 1$. For convenient, we rescale \eqref{e:Aform1.1} in time by the factor $\epsilon^{-1}=A$, thus we obtain the following equivalent reformulation:
\begin{equation}
\label{e:eform1.1}
    \frac{d}{dt}\theta_m^{\epsilon}(t)=i \FL_m\theta_m^{\epsilon}(t)-\epsilon(-\lap)^{\alpha}\theta_m^{\epsilon}(t),\qquad \theta_m^{\epsilon}(0)=\theta_0.
\end{equation} 
Multiplying $\theta^{\epsilon}_m(t)$ and integration by parts in $x$, one can easily get
\begin{equation}
\label{e:L2 rate}
    \frac{d}{dt}\norm{\theta_m^{\epsilon}}_{L^2}^2=-2\epsilon\norm{\theta_m^{\epsilon}}_{\dot{H}^{\alpha}}^2,
\end{equation}
and we have the following lemma on the decay of $\|\theta_m^{\epsilon}\|_{L^2}$.
\begin{lemma}
\label{lem:trivial case}
For any fixed $m>0$, suppose that for all times $t\in(a,b)$ we have $\norm{\theta_m^{\epsilon}(t)}_{\dot{H}^{\alpha}}^2\geqslant N\norm{\theta_m^{\epsilon}(t)}_{L^2}^2$. Then the following decay estimate holds
\begin{equation}
    \norm{\theta_m^{\epsilon}(b)}_{L^2}^2\leqslant e^{-2\epsilon N(b-a)}\norm{\theta_m^{\epsilon}(a)}_{L^2}^2.
\end{equation}
\end{lemma}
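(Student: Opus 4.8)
The plan is to treat this as a straightforward Gr\"onwall-type differential inequality built directly on top of the energy identity \eqref{e:L2 rate}. First I would introduce the shorthand $y(t) \defeq \norm{\theta_m^{\epsilon}(t)}_{L^2}^2$. Since $\theta_m^{\epsilon}$ solves the linear (hyper-)parabolic evolution \eqref{e:eform1.1}, parabolic smoothing guarantees that $y$ is smooth (in particular absolutely continuous) in $t$, so the manipulations below are justified. The identity \eqref{e:L2 rate} then reads $y'(t) = -2\epsilon \norm{\theta_m^{\epsilon}(t)}_{\dot H^{\alpha}}^2$ for all $t$.

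Next I would use the hypothesis to close this identity into a differential inequality. On the interval $(a,b)$ we are given $\norm{\theta_m^{\epsilon}(t)}_{\dot H^{\alpha}}^2 \geq N \norm{\theta_m^{\epsilon}(t)}_{L^2}^2 = N\, y(t)$, so substituting into the energy identity yields the autonomous bound $y'(t) \leq -2\epsilon N\, y(t)$ for every $t \in (a,b)$. This is the heart of the argument: the spectral-gap-type hypothesis converts the exact energy balance into a decay rate that is uniform in $t$ across the interval.

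Finally I would integrate the inequality via an integrating factor rather than quoting Gr\"onwall, to keep the constants transparent. Observe that $\frac{d}{dt}\bigl(e^{2\epsilon N t}\, y(t)\bigr) = e^{2\epsilon N t}\bigl(y'(t) + 2\epsilon N\, y(t)\bigr) \leq 0$ on $(a,b)$, so the quantity $e^{2\epsilon N t}\, y(t)$ is nonincreasing on $[a,b]$. Comparing its values at the two endpoints gives $e^{2\epsilon N b}\, y(b) \leq e^{2\epsilon N a}\, y(a)$, which rearranges to $y(b) \leq e^{-2\epsilon N(b-a)}\, y(a)$, precisely the claimed estimate.

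There is no genuine obstacle here; the lemma is an immediate consequence of the energy identity combined with the pointwise-in-time lower bound on $\norm{\theta_m^{\epsilon}}_{\dot H^{\alpha}}^2$. The only point requiring a word of care is the regularity justification for differentiating $\norm{\theta_m^{\epsilon}(t)}_{L^2}^2$ and for the integrating-factor step, but this follows from standard smoothing estimates for \eqref{e:eform1.1} and does not affect the conclusion. I would accordingly keep the write-up brief, emphasizing the substitution of the hypothesis into \eqref{e:L2 rate} as the single essential step.
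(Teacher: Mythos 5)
Your argument is correct and is precisely the standard Gr\"onwall/integrating-factor proof the paper has in mind: the authors omit the proof, citing it as standard from \cite{ConstantinKiselevEA08}, and the intended argument is exactly your substitution of the hypothesis into the energy identity \eqref{e:L2 rate} followed by integration over $[a,b]$.
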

We also need an estimate on the growth of the difference between the viscous and inviscid problem in terms of $L^2$ norm. First notice that for any smooth incompressible flow $u(m x)$, and define $\FL_m$ as before, then for any $\phi\in\dot{H}^{\alpha}$, $\alpha\geq 1$ and $t>0$ the following estimates hold:
\begin{equation}
\label{e:F_m condition}
    \norm{\FL_m\phi}_{L^2}\leqslant C\|u(m\cdot)\|_{L^{\infty}}\norm{\phi}_{\dot{H}^{\alpha}}\quad\text{and}\quad\norm{e^{i\FL_m t}\phi}_{\dot{H}^{\alpha}}\leqslant F_m(t)\norm{\phi}_{\dot{H}^{\alpha}}
\end{equation}
with both the constant $C$ and the function $F_m(t)<\infty$ independent of $\phi$ and $F_m(t)\in L_{\text{loc}}^2(0,\infty)$. Here $F_m(t)$ depends on $\|u(m\cdot)\|_{H^{\alpha}}$.
\begin{lemma}
\label{lem:difference estimate}
Fix any $m>0$, let $\theta_m^{0}(t)$, $\theta_m^{\epsilon}(t)$ be solution of 
\begin{equation}
\label{e:two equations}
    \frac{d}{dt}\theta_m^0(t)=i\FL_m\theta_m^{0}(t),\qquad\frac{d}{dt}\theta_m^{\epsilon}(t)=(i\FL_m-\epsilon(-\lap)^{\alpha})\theta_{m}^{\epsilon}(t),
\end{equation}
with $\theta_m^0(0)=\theta_m^{\epsilon}(0)=\theta_0\in\dot{H}^{\alpha}$. Then
\begin{equation}
    \frac{d}{dt}\norm{\theta_m^0(t)-\theta_m^{\epsilon}(t)}_{L^2}^2\leqslant\frac{1}{2}\epsilon\norm{\theta_m^0(t)}_{\dot{H}^{\alpha}}^2\leqslant\frac{1}{2}\epsilon F_m^2(t)\norm{\theta_0}_{\dot{H}^{\alpha}}^2.
\end{equation}
And further,
\begin{equation}
    \norm{\theta_m^0(t)-\theta_m^{\epsilon}(t)}_{L^2}^2\leqslant\frac{1}{2}\epsilon\norm{\theta_0}_{\dot{H}^{\alpha}}^2\int_0^{\tau}F_m^2(t)dt
\end{equation}
for any time $t<\tau$.
\end{lemma}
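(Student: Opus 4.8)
The plan is to run a direct energy estimate on the difference $w \defeq \theta_m^0 - \theta_m^\epsilon$. Subtracting the two equations in \eqref{e:two equations} gives $\frac{d}{dt}w = i\FL_m w + \epsilon(-\lap)^\alpha\theta_m^\epsilon$, with the favorable initial condition $w(0)=0$. I would then differentiate $\norm{w}_{L^2}^2$ in time to obtain
\[
\frac{d}{dt}\norm{w}_{L^2}^2 = 2\,\mathrm{Re}\,\langle i\FL_m w, w\rangle + 2\epsilon\,\mathrm{Re}\,\langle(-\lap)^\alpha\theta_m^\epsilon, w\rangle.
\]
Because $\FL_m$ is self-adjoint, $\langle \FL_m w, w\rangle$ is real, so the transport term $\mathrm{Re}\,\langle i\FL_m w, w\rangle$ vanishes identically; this is the step where incompressibility is really being used, and it removes the advection contribution entirely.

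For the surviving dissipative term I would integrate by parts to write $\langle(-\lap)^\alpha\theta_m^\epsilon, w\rangle = \langle(-\lap)^{\alpha/2}\theta_m^\epsilon, (-\lap)^{\alpha/2}w\rangle$ and then substitute $\theta_m^\epsilon = \theta_m^0 - w$. This produces the combination $\langle(-\lap)^{\alpha/2}\theta_m^0,(-\lap)^{\alpha/2}w\rangle - \norm{w}_{\dot H^\alpha}^2$, so that
\[
\frac{d}{dt}\norm{w}_{L^2}^2 = 2\epsilon\,\mathrm{Re}\,\langle(-\lap)^{\alpha/2}\theta_m^0,(-\lap)^{\alpha/2}w\rangle - 2\epsilon\norm{w}_{\dot H^\alpha}^2.
\]
The crucial point is that the sharp constant $\tfrac12$ in the statement is recovered by \emph{keeping} the negative term $-2\epsilon\norm{w}_{\dot H^\alpha}^2$ and completing the square, rather than discarding it and using bare Cauchy--Schwarz. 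Bounding the cross term by $2\epsilon\norm{\theta_m^0}_{\dot H^\alpha}\norm{w}_{\dot H^\alpha}$ and viewing the right-hand side as a quadratic in $\norm{w}_{\dot H^\alpha}$, its maximum over that variable is exactly $\tfrac{\epsilon}{2}\norm{\theta_m^0}_{\dot H^\alpha}^2$, which is precisely the first inequality.

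The second inequality is then immediate from the a priori growth estimate in \eqref{e:F_m condition}: since $\theta_m^0(t) = e^{i\FL_m t}\theta_0$, we have $\norm{\theta_m^0(t)}_{\dot H^\alpha} \leq F_m(t)\norm{\theta_0}_{\dot H^\alpha}$. Finally, integrating the differential inequality from $0$ to $t$ and using $w(0)=0$ together with $F_m^2 \geq 0$ gives, for every $t<\tau$,
\[
\norm{w(t)}_{L^2}^2 \leq \tfrac{\epsilon}{2}\norm{\theta_0}_{\dot H^\alpha}^2\int_0^t F_m^2(s)\,ds \leq \tfrac{\epsilon}{2}\norm{\theta_0}_{\dot H^\alpha}^2\int_0^\tau F_m^2(s)\,ds,
\]
as claimed. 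I expect no genuine analytic obstacle here; the only points demanding care are justifying the differentiation under the $\dot H^\alpha$-regularity hypothesis on $\theta_0$ (which is exactly why the hypothesis $F_m \in L^2_{\loc}$ guarantees all the pairings above are finite), and extracting the optimal constant via the completion-of-the-square step rather than a lossy estimate.
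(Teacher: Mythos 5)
Your argument is correct and is precisely the standard proof the paper defers to (it cites \cite{ConstantinKiselevEA08} rather than writing it out): an energy estimate on $w=\theta_m^0-\theta_m^\epsilon$, cancellation of the transport term by self-adjointness of $\FL_m$, and recovery of the constant $\tfrac12$ by retaining $-2\epsilon\norm{w}_{\dot H^\alpha}^2$ and completing the square, followed by \eqref{e:F_m condition} and integration in time from $w(0)=0$. No gaps; this matches the intended proof.
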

The proofs of Lemma \ref{lem:trivial case} and Lemma \ref{lem:difference estimate} are standard, the reader can find them on \cite{ConstantinKiselevEA08}.
\section{Discussion of the main theorem}
\begin{theorem}
\label{thm:main}
Let $u\in C^{\infty}(\T^2)$ be a cellular flow given in \eqref{eq:cell}. Fix $\gamma>0$, $\alpha\geq 1$. For any $\epsilon>0$, there exists $m_0>0$, such that for any $m\geq m_0$, there exists an $A_0>0$, such that $\tau_{\alpha}^{*}(Au(m\cdot),\gamma)\leq \epsilon$ for any $A\geq A_0$. 
\end{theorem}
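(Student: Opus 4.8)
The plan is to run the Constantin--Kiselev--Ryzhik--Zlatoš dichotomy---whose two analytic inputs are precisely Lemma \ref{lem:trivial case} and Lemma \ref{lem:difference estimate}---on the rescaled equation \eqref{e:eform1.1}. Following the notation there I set $\epsilon=A^{-1}$ and write $\sigma$ for the prescribed tolerance (denoted $\epsilon$ in the statement). Since a decay of the $L^2$ norm by the factor $\tfrac12$ over a rescaled time $T$ corresponds to a physical dissipation time $\tau_\alpha^*(Au(m\cdot),\gamma)\le\epsilon T$, it suffices to prove, for each fixed large $m$, a one-step estimate $\|\theta_m^\epsilon(\tau)\|_{L^2}^2\le(1-\delta)\|\theta_m^\epsilon(0)\|_{L^2}^2$ with $\tau,\delta$ independent of $\epsilon$, and then to iterate it $O(\delta^{-1})$ times. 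First I would fix the spectral cutoff $P_N$ (onto the first $N$ eigenmodes of $-\lap$) and a step length $\tau$, both chosen as functions of $m$, and split $[0,\tau]$ into the high-frequency times, where $\|\theta_m^\epsilon\|_{\dot{H}^\alpha}^2\ge\lambda_{N+1}^\alpha\|\theta_m^\epsilon\|_{L^2}^2$, and the complementary low-frequency times.

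On the high-frequency set, the energy identity \eqref{e:L2 rate} together with Lemma \ref{lem:trivial case} gives decay at the rate $2\epsilon\lambda_{N+1}^\alpha$; if that set carries at least half of $[0,\tau]$, the one-step estimate follows. Taking the cutoff $N\sim m^2$ (so that $P_N$ covers the frequencies below $m$), the resulting physical decay time from this mechanism is of order $\lambda_{N+1}^{-\alpha}\sim m^{-2\alpha}$, which is small for large $m$. This is the one place where passing from $\alpha=1$ to $\alpha>1$ genuinely helps: the hyper-dissipation $(-\lap)^\alpha$ damps the truly high modes only more strongly.

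The substantive case is when the low-frequency times dominate, so that $\|P_N\theta_m^\epsilon(t)\|_{L^2}\ge\tfrac12\|\theta_m^\epsilon(t)\|_{L^2}$ on a set of measure $\ge\tau/2$. Here I would freeze the data and compare with the inviscid evolution $e^{i\FL_m t}$, propagating only the low-mode component $P_N\theta_m^\epsilon(0)$, whose $\dot{H}^\alpha$ norm is controlled by $\lambda_N^{\alpha/2}\|\theta_m^\epsilon(0)\|_{L^2}$ through \eqref{e:F_m condition}; Lemma \ref{lem:difference estimate} then bounds the viscous--inviscid difference on $[0,\tau]$ by $C\epsilon\lambda_N^\alpha\|\theta_m^\epsilon(0)\|_{L^2}^2\int_0^\tau F_m^2\,dt$, which is negligible once $\epsilon=A^{-1}$ is small with $m,\tau$ fixed. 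The one-step estimate thus reduces to showing that $e^{i\FL_m t}$ \emph{depletes the first $N$ modes}. Decomposing along the spectrum of the self-adjoint operator $\FL_m$: the continuous part (the within-cell fluctuations) is dispersed above $P_N$ by the RAGE theorem after a time $T_0(m,N)<\infty$; the streamline-profile modes $f(\sin 2\pi m x_1\sin 2\pi m x_2)$ oscillate at frequency $\ge cm$ and hence lie above the cutoff $N\sim m^2$. What survives inside $P_N$ is the cell-averaged, large-scale component---the low-frequency part of the kernel of $u(m\cdot)\cdot\grad$---which advection leaves fixed and RAGE does not see, and which can be depleted only through the effective (hyper-)diffusion generated by the advection--dissipation interaction. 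For $\alpha=1$ this depletion is exactly the content of the stochastic estimate $D(u_m)\to\infty$ of \cite{iyer2021convection}; for general $\alpha\ge1$ I would instead establish it by a corrector/cell-problem (two-scale) estimate---solving $u(m\cdot)\cdot\grad\chi=P_N\theta_m^\epsilon(0)$ modulo the kernel and bounding the induced energy flux---yielding an effective hyper-diffusivity that diverges as $A\to\infty$ just as $D(Au(m\cdot))$ does, which forces the cell-averaged mass to decay without recourse to the stochastic representation.

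Finally I would assemble the quantifiers in the order demanded by the statement: choose $m\ge m_0$ so that the high-frequency contribution $m^{-2\alpha}<\sigma/2$ and the cell-scale eigenstructure lies above $N\sim m^2$; with $m$, $N$ and $T_0(m,N)$ now fixed, choose $A=1/\epsilon\ge A_0(m)$ large enough that both the viscous--inviscid error above and the RAGE contribution $\epsilon T_0(m,N)$ fall below $\sigma/2$. I expect the genuine obstacle to be the cell-averaged slow-mode step of the third paragraph: for $\alpha>1$ the effective diffusivity no longer has a probabilistic meaning, so one must produce an $\alpha$-independent, variational (or two-scale) replacement and track its divergence in $A$ with constants sharp enough that the ``$m$ first, then $A$'' choice closes. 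Everything else---the dissipation of high modes, the RAGE dispersion of the continuous spectrum, and the iteration---is insensitive to $\alpha$ and parallels \cite{ConstantinKiselevEA08,iyer2021convection}.
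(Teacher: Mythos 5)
Your framework --- rescaling by $A$, the high/low frequency dichotomy via Lemma \ref{lem:trivial case} and \eqref{e:L2 rate}, the viscous--inviscid comparison via Lemma \ref{lem:difference estimate}, and the RAGE theorem for the continuous spectrum of $\FL_m$ --- is exactly the skeleton of the paper's proof (which is the Constantin--Kiselev--Ryzhik--Zlato\v{s} scheme). But there is a genuine gap at the step you yourself flag as the obstacle: the treatment of the point spectrum of $\FL_m$. You propose to handle the surviving low modes by constructing an effective hyper-diffusivity through a corrector/cell-problem estimate and tracking its divergence in $A$; you do not carry this out, and it is precisely the part of the argument that does not follow from anything in the paper's toolbox. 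The paper avoids this entirely. Its key new ingredient is Lemma \ref{lem:large_H1}: any normalized $\dot{H}^1$ eigenfunction $\phi_m^0$ of $\FL_m$ satisfies $\norm{\phi_m^0}_{\dot H^1}^2>\tfrac{1}{2\tau_0}$ once $m$ is large, proved \emph{by contradiction with the already-known $\alpha=1$ result} (Theorem \ref{thm:cellular for tau1} of \cite{iyer2021convection}): if such an eigenfunction had small $\dot H^1$ norm, feeding it into the standard advection-diffusion equation would keep $\abs{\ip{\theta^A_m(t),\phi_m^0}}\ge\tfrac12$ up to time $\tau_0$ uniformly in $A$, contradicting $\tau_1^*(Au(m\cdot),\gamma)\to 0$. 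Poincar\'e then upgrades this to a $\dot H^\alpha$ lower bound, which feeds into Lemma \ref{lem:H1 mean large} and closes the discrete-spectrum case with no new homogenization input. In short, the paper bootstraps the $\alpha>1$ case off the $\alpha=1$ stochastic result as a black box, which is exactly the ``$\alpha$-independent replacement'' you were looking for.

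A secondary inaccuracy: your claim that the streamline-profile modes oscillate at frequency $\gtrsim m$ and hence lie above the cutoff $P_N$ is not correct for the full point spectrum. The kernel of $u(m\cdot)\cdot\grad$ contains $H^1$ functions that take different (nearly constant) values on different cells and therefore carry substantial low-frequency content; these are precisely the dangerous modes that neither the inviscid evolution nor RAGE depletes. They are controlled in the paper not by a Fourier-support argument but by the quantitative $\dot H^1$ lower bound of Lemma \ref{lem:large_H1} (such functions must transition across separatrices on scale $1/m$, and the contradiction argument makes this quantitative). Without either that lemma or a completed two-scale estimate, your low-frequency case does not close.
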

\begin{remark}
For $d=3$, one can also have analogous of \eqref{eq:cell}, say, Example 5.10 in \cite{iyer2021convection}.
\end{remark}
\begin{remark}
The case $\alpha=\gamma=1$ has already been proved in \cite{iyer2021convection} via a stochastic method. Therefore, we only remain to show that the smallness of $\tau_1^{*}(Au(m\cdot),\gamma)$ implies the smallness of $\tau_{\alpha}^{*}(Au(m\cdot),\gamma)$ for any $\alpha>1$.
\end{remark}
\begin{remark}
Through this chapter, $\alpha\geq 1$ is a fixed number. The constant on the estimates may always depend on the value of $\alpha$.
\end{remark}
To prove the main theorem, we first establish and prove some lemmas. 

First, let us consider the eigenfunctions of $\FL_m$. We will show that the eigenfunctions of $\FL_m$ will have big $\dot{H}^{\alpha}$ norm. In fact, for the cellular flow in dimension two, the eigenfunction always exists and is of class $H^{\alpha}$. However, for the sake of simplicity we ignore the existence discussion here.

Let $\phi_m^0$ be any normalized ($\norm{\phi_m^0}_{L^2}=1$) eigenfunction of $\FL_m$ that belongs to $\dot{H}^{\alpha}$ (with $\alpha\geqslant1$) associated with eigenvalue $E_m$. Then the following lemma shows that for fixed sufficiently large $m$, the normalized point spectrums of $\FL_m$ have an uniform lower bound in terms of the $\dot{H}^{\alpha}$ norm, otherwise it contradicts with the result for $\alpha=\gamma=1$ proved in \cite{iyer2021convection}. 
\begin{lemma}
\label{lem:large_H1}
With the same assumption of $u(m\cdot)$ in Theorem \ref{thm:main}. Given $\tau_0>0$, there exists $m_0(\tau_0)$ such that for any fixed $m>m_0(\tau_0)$ it holds $\norm{\phi_m^0}_{\dot{H}^1}^2>\frac{1}{2\tau_0}$. If $\phi_m^0$ further belongs to $H^{\alpha}$ for some $\alpha>1$, then $\norm{\phi_m^0}_{\dot{H}^{\alpha}}^2>\frac{1}{2\lambda_1^{\alpha-1} \tau_0}$.
\end{lemma}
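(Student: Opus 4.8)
The plan is to reduce everything to the already-established case $\alpha=1$ by exploiting the fact that an eigenfunction of $\FL_m$ is a direction along which advection is completely inert. Indeed, if $\FL_m\phi_m^0=E_m\phi_m^0$, then the inviscid flow acts by a pure phase, $e^{i\FL_m t}\phi_m^0=e^{iE_m t}\phi_m^0$, so it neither produces small scales nor alters any homogeneous Sobolev norm of $\phi_m^0$. The heuristic is then: were $\norm{\phi_m^0}_{\dot H^1}$ abnormally small, dissipation would act on $\phi_m^0$ so slowly that the $L^2$ energy of the advection--diffusion evolution started from $\phi_m^0$ could not halve on the time scale dictated by a small dissipation time, contradicting the smallness of $\tau_1^*$ known from \cite{iyer2021convection}.

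To make this quantitative, I would run the time-rescaled equation \eqref{e:eform1.1} with $\alpha=1$, $\epsilon=1/A$, and initial datum $\theta_0=\phi_m^0$, and compare the viscous solution $\theta_m^\epsilon$ with the inviscid one $\theta_m^0(t)=e^{iE_m t}\phi_m^0$. Because $\norm{\theta_m^0(t)}_{\dot H^1}=\norm{\phi_m^0}_{\dot H^1}$ is constant in time, the first inequality of Lemma \ref{lem:difference estimate} integrates to the clean bound
\begin{equation*}
  \norm{\theta_m^0(t)-\theta_m^\epsilon(t)}_{L^2}^2\leq \tfrac12\,\epsilon\,\norm{\phi_m^0}_{\dot H^1}^2\,t,
\end{equation*}
which, crucially, carries no dependence on the $m$-dependent factor $F_m$. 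Since $\norm{\theta_m^0(t)}_{L^2}=1$, the triangle inequality gives $\norm{\theta_m^\epsilon(t)}_{L^2}\geq 1-(\tfrac12\epsilon\norm{\phi_m^0}_{\dot H^1}^2 t)^{1/2}$.

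Next I would feed in the dissipation time. The time rescaling $\rho=At$ turns the physical dissipation time $\tau_1^*(Au(m\cdot),1)$ into the rescaled dissipation time $A\,\tau_1^*(Au(m\cdot),1)$, at which the solution operator contracts by $1/2$; evaluating the previous inequality at $t=A\tau_1^*$ and using $\epsilon A=1$ yields $\norm{\phi_m^0}_{\dot H^1}^2\,\tau_1^*(Au(m\cdot),1)\geq \tfrac12$, i.e.
\begin{equation*}
  \norm{\phi_m^0}_{\dot H^1}^2\geq \frac{1}{2\,\tau_1^*(Au(m\cdot),1)}.
\end{equation*}
Since $\FL_m$, and hence $\phi_m^0$ and $E_m$, are independent of the amplitude $A$, this holds for every $A>0$; taking the infimum over $A$ and invoking the $\alpha=1$ result (which provides, for $m>m_0(\tau_0)$, an amplitude making $\tau_1^*(Au(m\cdot),1)<\tau_0$) gives $\norm{\phi_m^0}_{\dot H^1}^2>\tfrac{1}{2\tau_0}$, the first assertion. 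For the second assertion I would only need the elementary spectral comparison: every nonzero Fourier mode has $-\lap$-eigenvalue at least $\lambda_1$, so $\norm{\phi_m^0}_{\dot H^\alpha}^2\geq\lambda_1^{\alpha-1}\norm{\phi_m^0}_{\dot H^1}^2$, and since $\lambda_1\geq 1$ this is bounded below by $\lambda_1^{-(\alpha-1)}\norm{\phi_m^0}_{\dot H^1}^2>\tfrac{1}{2\lambda_1^{\alpha-1}\tau_0}$.

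The main obstacle is the uniformity in $m$: a direct application of the difference estimate \eqref{e:F_m condition}--Lemma \ref{lem:difference estimate} to arbitrary data brings in $F_m$, which may grow with $m$ and would ruin the bound. The device that removes it is precisely the choice of eigenfunction data, for which the inviscid $\dot H^\alpha$ norm is exactly conserved. The second point to be careful about is that one cannot simply rerun the energy argument for $\alpha>1$ to obtain the $\dot H^\alpha$ bound directly, since that would presuppose a smallness statement for $\tau_\alpha^*$ --- exactly what Theorem \ref{thm:main} is trying to establish; this is why the argument is bootstrapped from $\alpha=1$ together with the spectral gap, rather than run natively at level $\alpha$.
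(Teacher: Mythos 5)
Your argument is correct and takes essentially the same route as the paper: both reduce to the $\alpha=1$ case, exploit that an eigenfunction of $\FL_m$ evolves under the inviscid flow by a pure phase (so no $m$-dependent factor $F_m$ enters), derive the lower bound $\|\phi_m^0\|_{\dot H^1}^2\geq 1/(2\tau_1^*(Au(m\cdot),1))$ uniformly in $A$, and then invoke the cellular-flow result of \cite{iyer2021convection} to push $\tau_1^*$ below $\tau_0$, with the second assertion following from the same Poincar\'e-type spectral comparison. The only difference is in the technical device: the paper pairs the viscous solution against $\phi_m^0$ and uses the global energy identity $\int_0^\infty\|\theta_m^A(t)\|_{\dot H^1}^2\,dt\leq\tfrac12$, whereas you compare the viscous and inviscid solutions in $L^2$ via Lemma \ref{lem:difference estimate}; both implementations yield the same constant.
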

\begin{proof}
The proof is very similar to the proof of Theorem 1.4 (the easier direction) in \cite{ConstantinKiselevEA08}. We prove by showing contradiction, assume for arbitrary large $m_0$ and $m>m_0$, there exists an eigenvalue $E_m$ and corresponding eigenfunction $\phi_m^0$ (associate to operator $\FL_m$) such that $\tau_0\norm{\phi_m^0}_{\dot{H}^1}^2\leqslant\frac{1}{2}$. Consider the solution $\theta^A_{m}(t)$ to \eqref{e:adv-diff-eqn}, with convection term $A u(m x)\cdot\nabla$ and initial data $\phi_m^0\in L^2_0\cap \dot{H}^1$(with $\norm{\phi_m^0}_{L^2}=1$). Take the $L^2$ inner product of \eqref{e:adv-diff-eqn} with $\phi_m^0$, we get
\begin{equation}
  \frac{d}{dt}\langle\theta^A_m(t), \phi_m^0\rangle=iA E_m\langle\theta^A_m, \phi_m^0\rangle+\langle\lap\theta^A_m,\phi_m^0\rangle.
\end{equation}
This further yields
\begin{equation}
    \Big\vert\frac{d}{dt}\left(e^{-iA E_m t}\langle\theta^A_m, \phi_m^0\rangle\right)\Big\vert\leqslant\frac{1}{2}\left(\norm{\theta^A_m(t)}_{\dot{H}^1}^2+\norm{\phi_m^0}_{\dot{H}^1}^2\right).
\end{equation}
Note that $\int_0^{\infty}\norm{\theta^A_m(t)}_{\dot{H}^1}^2dt\leqslant\frac{1}{2}\norm{\phi_m^0}_{L^2}^2=\frac{1}{2}$ and $\tau_0\|\phi_m^0\|_{\dot{H}^1}^2\leq \frac{1}{2}$. Hence,
\begin{align*}
\left| \left(e^{-iA E_m \tau_0}\langle\theta^A_m,  \phi_m^0\rangle\right)-1\right|&\leq\int_0^{\tau_0}\left|\frac{d}{dt}\left(e^{-iA E_m t}\langle\theta^A_m,  \phi_m^0\rangle\right)\right|dt\\
&\leq \frac{1}{2}\left(\int_0^{\tau_0}\norm{\theta^A_m(t)}_{\dot{H}^1}^2dt+\tau_0\norm{\phi_m^0}_{\dot{H}^1}^2\right)\leq \frac{1}{2}.
\end{align*}
Therefore for $0<t\leqslant\tau_0$, we have $\abs{\langle\theta^A_m(t),\phi_m^0\rangle}\geq\frac{1}{2}$, which further implies $\norm{\theta^A_m(\tau_0)}_{L^2}\geq\frac{1}{2}\norm{\phi_m^0}_{L^2}$ uniformly in $A$. Equivalently, for any $m>0$ and $A>0$, the dissipation time $\tau_1^{*}(A u(m x))\geq\tau_0$, which contradicts to the Theorem \ref{thm:cellular for tau1} and Remark \ref{rmk:23dimension dissipation}. This completes the proof of the first part of the lemma. The second part directly follows from the Poincare inequality.
\end{proof}
With the help of Lemma \ref{lem:large_H1}, we can further control from below the growth of $\dot{H}^{\alpha}$ norm of solutions, to the underlying inviscid problem, corresponding to discrete eigenfunctions.
\begin{lemma}
\label{lem:H1 mean large}
With the same assumption of $u(mx)$ in Theorem \ref{thm:main}. Let $K\subset S=\{\theta\in L^2: \|\theta\|_{L^2}=1\}$ be a compact set in $L^2$. Consider the set $K_1\defeq\left\{\theta\in K\vert\norm{P_p\theta}_{L^2}\geqslant\frac{1}{2}\right\}$. Then for any $B>0$, there exists $m_0(B)$ such that for any fixed $m>m_0$ we can find $N_p(B,K,m)$ and $T_p(B,K,m)$ such that for any $N\geqslant N_p(B,K,m)$, $T\geqslant T_p(B,K,m)$ and any $\theta\in K_1\cap\dot{H}^{\alpha}$,
\begin{equation}
    \frac{1}{T}\int_0^T\norm{P_N e^{i\FL_m t} P_p\theta}_{\dot{H}^{\alpha}}^2 dt\geqslant B.
\end{equation}
\end{lemma}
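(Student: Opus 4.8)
The plan is to combine the lower bound on the $\dot H^\alpha$ norm of individual eigenfunctions furnished by Lemma \ref{lem:large_H1} with a compactness/uniformity argument over the set $K_1$. The key observation is that $e^{i\FL_m t}$ acts on the discrete spectral subspace as a superposition of oscillations $e^{iE_m^{(j)}t}$ against the eigenfunctions $\phi_m^{(j)}$, so on a single eigenmode the integrand is essentially time-independent. The difficulty is twofold: (i) one must pass from the $\dot H^\alpha$ norm of the \emph{full} discrete projection to the truncated projection $P_N$, and (ii) one must make all the constants uniform over the compact family $K_1$.

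First I would fix $\theta \in K_1 \cap \dot H^\alpha$ and expand $P_p \theta = \sum_j c_j \phi_m^{(j)}$ in the orthonormal eigenbasis of $\FL_m$ on its discrete subspace, so that $\|P_p\theta\|_{L^2}^2 = \sum_j |c_j|^2 \geq \tfrac14$ by the definition of $K_1$. Then $e^{i\FL_m t} P_p\theta = \sum_j c_j e^{iE_m^{(j)} t}\phi_m^{(j)}$. Since $P_N$ commutes with neither $\FL_m$ nor the frequency truncation, I would instead argue that for $N$ large the truncation $P_N$ recovers almost all of the $\dot H^\alpha$ mass: for each fixed eigenfunction $\phi_m^{(j)}$, which lies in $\dot H^\alpha$, one has $\|P_N \phi_m^{(j)}\|_{\dot H^\alpha} \to \|\phi_m^{(j)}\|_{\dot H^\alpha}$ as $N \to \infty$, and by Lemma \ref{lem:large_H1} the latter is bounded below by $(2\lambda_1^{\alpha-1}\tau_0)^{-1/2}$ once $m > m_0(\tau_0)$. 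Choosing $\tau_0$ small (equivalently the lower bound large) in terms of $B$ gives a single eigenmode enough $\dot H^\alpha$ energy to beat $B$.

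The main obstacle will be the time-averaging and the uniformity over $K_1$. After expanding, the time average $\tfrac1T\int_0^T \|P_N e^{i\FL_m t}P_p\theta\|_{\dot H^\alpha}^2\,dt$ produces diagonal terms $\sum_j |c_j|^2 \|P_N\phi_m^{(j)}\|_{\dot H^\alpha}^2$ plus oscillatory cross terms whose time averages decay like $1/T$ (via the standard almost-periodic averaging, using that distinct eigenvalues $E_m^{(j)} \neq E_m^{(k)}$ give $\tfrac1T\int_0^T e^{i(E_m^{(j)}-E_m^{(k)})t}\,dt \to 0$); choosing $T \geq T_p$ large makes the cross terms negligible. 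The diagonal part is then bounded below by $(\min_j \|P_N\phi_m^{(j)}\|_{\dot H^\alpha}^2)\sum_j |c_j|^2 \geq \tfrac14 \cdot (2\lambda_1^{\alpha-1}\tau_0)^{-1}$, which exceeds $B$ for $\tau_0$ chosen appropriately. For the uniformity, I would exploit compactness of $K$ in $L^2$: cover $K_1$ by finitely many $L^2$-balls, control the variation of the integrand in terms of the $L^2$-distance of the data (this is where $F_m(t) \in L^2_{\mathrm{loc}}$ and the bound in \eqref{e:F_m condition} on $\|\FL_m \phi\|_{L^2}$ enter, to handle the $\dot H^\alpha$ regularity), and thereby select $N_p$, $T_p$ valid simultaneously for all $\theta \in K_1 \cap \dot H^\alpha$. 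The delicate point is ensuring the finitely many eigenvalue gaps $|E_m^{(j)} - E_m^{(k)}|$ appearing in the cross-term estimate are bounded below uniformly over the cover, which follows once $N$ caps the number of relevant modes.
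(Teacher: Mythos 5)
Your proposal follows essentially the same route as the paper's proof (which is an adaptation of Lemma 3.3 in Constantin--Kiselev--Ryzhik--Zlato\v{s}): decompose $P_p\theta$ over the point spectrum of $\FL_m$, invoke Lemma \ref{lem:large_H1} with $\tau_0$ chosen in terms of $B$ to bound the $\dot H^{\alpha}$ mass of each eigencomponent from below, recover most of that mass under $P_N$ for $N$ large, kill the oscillatory cross terms by time averaging, and use compactness of $K$ to make $N_p$ and $T_p$ uniform. The one imprecision is the bound $\bigl(\min_j\norm{P_N\phi_m^{(j)}}_{\dot H^{\alpha}}^2\bigr)\sum_j\abs{c_j}^2$, which is vacuous for fixed $N$ when infinitely many eigenmodes contribute (the minimum can degenerate to $0$); the paper instead groups by \emph{distinct} eigenvalues via the projections $Q_j^m$ and bounds the sum $\sum_j\norm{P_NQ_j^m\theta}_{\dot H^{\alpha}}^2\geq 2B$ directly, which both avoids this issue and absorbs the non-oscillating cross terms arising within a degenerate eigenspace.
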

\begin{proof}
The proof is quite similar to the proof of Lemma 3.3 in \cite{ConstantinKiselevEA08}. Denote by $E_j^m$ the eigenvalues of $\FL_m$(distinct, without repetitions) and by $Q_j^m$ the orthogonal projection on the space spanned by the eigenfunctions corresponding to $E_j^m$. Without lost of generality, we assume $K_1$ is nonempty, otherwise there is nothing to prove. Observe that, by applying Lemma \ref{lem:large_H1} with $\tau_0=\frac{1}{8B\lambda_1^{\alpha-1}}$ there is $m_0(B)$ such that for any $m>m_0$ we have
\begin{equation}
    \sum_j\norm{Q_j^m\theta}_{\dot{H}^{\alpha}}^2>4B.
\end{equation}
Therefore by compactness of $K$, there exists $N_p=N_p(B,K,m)$, uniform in $j$, such that for $N>N_p(B,K,m)$ 
$$
 \sum_j\norm{P_N Q_j^m\theta}_{\dot{H}^{\alpha}}^2\geq 2B.
$$
The remains proof is identical the same as the proof of Lemma 3.3 in \cite{ConstantinKiselevEA08}, with $\norm{\cdot}_1$ (defined in \cite{ConstantinKiselevEA08}) replaced by $\norm{\cdot}_{\dot{H}^{\alpha}}$.
\end{proof}
On the other hand, for the continuous spectrum of $\FL_m$ we can control it by the following RAGE type theorem.
\begin{lemma}\label{lem:RAGE thm}
Let $K\subset S$ be a compact set. For any $N,\sigma,m>0$, there exists $T_c(N,\sigma,m,K)$ such that for all $T\geqslant T_c(N,\sigma,m,K)$ and any $\theta\in K$,
\begin{equation}
    \frac{1}{T}\int_0^T \norm{P_N e^{i\FL_m t}P_c\theta}_{L^2}^2 dt\leqslant\sigma.
\end{equation}
\end{lemma}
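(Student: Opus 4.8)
The statement is a uniform (over the compact set $K$) version of the classical RAGE theorem, so the plan is to first invoke RAGE pointwise in $\theta$ and then upgrade to uniformity by a compactness-and-equicontinuity argument. Recall that $\FL_m=iu(m\,\cdot)\cdot\grad$ is self-adjoint on $L_0^2$, that $P_c$ is the spectral projection onto its continuous subspace, and that $P_N$ has finite rank and is therefore compact. Since $\FL_m$ is self-adjoint, $e^{i\FL_m t}$ is unitary, so the RAGE theorem applied to the generator $\FL_m$, the compact operator $P_N$, and the vector $P_c\theta$ (which lies in the continuous subspace) yields, for each fixed $\theta$,
\begin{equation*}
    \lim_{T\to\infty}\frac1T\int_0^T\norm{P_N e^{i\FL_m t}P_c\theta}_{L^2}^2\,dt=0.
\end{equation*}
This settles the limit for a single initial datum; the real content of the lemma is that the rate can be made uniform over $K$.

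To obtain the uniformity, I would set $f_\theta(T)\defeq\frac1T\int_0^T\norm{P_N e^{i\FL_m t}P_c\theta}_{L^2}^2\,dt$ and show that the family $\{\theta\mapsto f_\theta(T)\}_{T>0}$ is equi-Lipschitz in $\theta$, with a Lipschitz constant independent of $T$. Writing $a=P_N e^{i\FL_m t}P_c\theta$ and $b=P_N e^{i\FL_m t}P_c\psi$ and using $\bigl|\norm{a}_{L^2}^2-\norm{b}_{L^2}^2\bigr|\le\norm{a-b}_{L^2}(\norm{a}_{L^2}+\norm{b}_{L^2})$ together with the fact that $P_N$, the unitary $e^{i\FL_m t}$ and $P_c$ are all contractions, one gets $\norm{a-b}_{L^2}\le\norm{\theta-\psi}_{L^2}$ and $\norm{a}_{L^2}+\norm{b}_{L^2}\le2$ for $\theta,\psi\in S$. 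Integrating in $t$ and averaging then gives
\begin{equation*}
    \abs{f_\theta(T)-f_\psi(T)}\le 2\norm{\theta-\psi}_{L^2}\qquad\text{for all }T>0.
\end{equation*}

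The final step is a standard finite-net argument. Given $\sigma>0$, I would cover the compact set $K$ by finitely many $L^2$-balls of radius $\sigma/8$ centred at points $\theta_1,\dots,\theta_k\in K$. For each $i$ the pointwise RAGE limit provides a time $T_i$ with $f_{\theta_i}(T)\le\sigma/2$ for all $T\ge T_i$; set $T_c(N,\sigma,m,K)\defeq\max_i T_i$. For an arbitrary $\theta\in K$, choosing $\theta_i$ with $\norm{\theta-\theta_i}_{L^2}\le\sigma/8$ and combining the equi-Lipschitz bound with this choice gives, for $T\ge T_c$,
\begin{equation*}
    f_\theta(T)\le f_{\theta_i}(T)+2\norm{\theta-\theta_i}_{L^2}\le\frac\sigma2+\frac\sigma4\le\sigma,
\end{equation*}
which is exactly the asserted estimate. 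I do not expect a serious obstacle: the only points requiring mild care are the correct invocation of RAGE for the continuous part $P_c\theta$ (so that compactness of $P_N$ forces the Cesàro average to vanish) and the observation that the Lipschitz constant in the second step is genuinely independent of $T$, which is precisely what allows the finite net to transfer the finitely many pointwise limits into a single uniform one.
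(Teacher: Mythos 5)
Your proof is correct and follows essentially the same route as the paper's source for this lemma: the paper defers to Lemma 3.2 of \cite{ConstantinKiselevEA08}, whose argument is exactly the pointwise RAGE theorem for the continuous subspace combined with the observation that the time-averaged quantity is uniformly Lipschitz in $\theta$ (all operators involved being contractions), so a finite net over the compact set $K$ upgrades the limit to a uniform one. No gaps; the only point worth noting is that $P_N$ is finite rank, hence compact, which you correctly use to invoke RAGE.
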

The proof of Lemma \ref{lem:RAGE thm} can be found in \cite{ConstantinKiselevEA08}.

Now, we turn to the main theorem. We may decomposite the proof into two parts:

(1). For the iniviscid problem, by a decomposition of the initial value $\theta_0$ into continuous spectrum and discrete spectrum of $\FL_m$, one can use Lemma \ref{lem:H1 mean large} and \ref{lem:RAGE thm} lower bound of high frequencies in the sense of time average. 

(2). By Lemma \ref{lem:large_H1} and rescaling of time, one can control the difference of the solutions to the inviscid problem and the viscid problem, and based on Lemma \ref{lem:trivial case} to get the desired decay estimate. 

This argument is similar to the proof of the Theorem 1.4 in \cite{ConstantinKiselevEA08}. For completion, we provide the whole proof in Appendix.

\begin{remark}
The work in \cite{iyer2022quantifying} can be generalized to hyper-diffusion in the same manner. However, due to applying the RAGE theorem in the proof of Theorem \ref{thm:main} (or Lemma \ref{lem:RAGE thm}), the quantitative estimates in \cite{iyer2022quantifying} will be lost while translating to the hyper-diffusion scenario.
\end{remark}

\section{Applications}
\subsection{Applications to reaction hyper-diffusion equations}
\label{sec:Applications to reaction hyper-diffusion equations}
In \cite{iyer2021convection}, the authors provide two applications that rely on the smallness of $\tau_1^*(u)$, one is the suppression of blow-up in the Keller-Segel system, the other one is the quenching in models of combustion. Here we collect several results that rely on the smallness of $\tau_2^*(u)$. In this chapter, we will use $\bar{f}$ to denote the mean of function $f$.
\begin{enumerate}
    \item Advective Cahn-Hilliard equation (see \cite{feng2020phase})
    \begin{equation}\label{e:CHE}
        c_t+ u\cdot\grad c + \gamma \lap^2 c = \lap(c^3-c).
    \end{equation}
    \begin{theorem}
    \label{thm:Cahn-Hilliard}
    Let $d\in\{2,3\}$, $u\in L^{\infty}([0,\infty);W^{1,\infty}(\T^d))$, and $c$ be the strong solution of \eqref{e:CHE} with initial data $c_0\in H^2(\T^d)$. 
    \begin{enumerate}
        \item When $d=2$, for any $\beta>1,\mu>0$, there exists a time
        \begin{equation*}
        T_0=T_0(\norm{c_0-\bar{c}}_{L^2},\bar{c},\beta,\gamma,\mu)
        \end{equation*}
        such that if $\tau_2^{*}(u,\gamma)<T_0$, then for every $t\geq 0$, we have
        \begin{equation}\label{e:CHE-exp-decay}
            \norm{c(t)-\bar{c}}_{L^2}\leqslant\beta e^{-\mu t}\norm{c_0-\bar{c}}_{L^2}.
        \end{equation}
        \item When $d=3$, for any $\beta>1,\mu>0$, there exists a time
        \begin{equation*}
            T_1=T_1(\norm{c_0-\bar{c}}_{L^2},\bar{c},\beta,\gamma,\mu)
        \end{equation*}
        such that if
        \begin{equation*}
            (1+\norm{\grad u}_{L^{\infty}})^{1/2}\tau_2^*(u,\gamma)<T_1,
        \end{equation*}
        then \eqref{e:CHE-exp-decay} still holds for every $t\geq 0$.
    \end{enumerate}
    \end{theorem}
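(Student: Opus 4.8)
The plan is to exploit the strong linear dissipation encoded in the smallness of $\tau_2^*(u,\gamma)$ and to treat the reaction term $\lap(c^3-c)$ as a perturbation that cannot overcome it. First I would record that the spatial mean $\bar c$ is conserved: integrating \eqref{e:CHE} over $\T^d$ annihilates $u\cdot\grad c$ (since $\grad\cdot u=0$), $\gamma\lap^2 c$, and $\lap(c^3-c)$, so $\tfrac{d}{dt}\bar c=0$. Setting $w\defeq c-\bar c\in L^2_0$ and expanding the cubic, $w$ solves
\[
\partial_t w+u\cdot\grad w+\gamma\lap^2 w=\lap\mathcal N(w),\qquad \mathcal N(w)\defeq w^3+3\bar c\,w^2+(3\bar c^2-1)w,
\]
so the potentially destabilizing spinodal term $(3\bar c^2-1)\lap w$ and the genuinely nonlinear terms $\lap(w^3+3\bar c\,w^2)$ are collected into a single forcing $\lap\mathcal N(w)$ acting on the linear advection--hyperdiffusion flow whose solution operator is $\mathcal S^{u,2}_{s,t}$.

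Second, I would run a window-by-window argument on intervals $[n\tau,(n+1)\tau]$ with $\tau=\tau_2^*(u,\gamma)$. Duhamel's formula gives
\[
w((n+1)\tau)=\mathcal S^{u,2}_{n\tau,(n+1)\tau}w(n\tau)+\int_{n\tau}^{(n+1)\tau}\mathcal S^{u,2}_{s,(n+1)\tau}\,\lap\mathcal N(w(s))\,ds .
\]
By Definition \ref{def:dissipationTime} the first term has $L^2$ norm at most $\tfrac12\norm{w(n\tau)}_{L^2}$, and since the homogeneous flow contracts $L^2$ (energy law \eqref{e:L2 rate}), the Duhamel integral is bounded by $\int_{n\tau}^{(n+1)\tau}\norm{\lap\mathcal N(w(s))}_{L^2}\,ds$. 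If I can show this integral is dominated by, say, $\tfrac14\sup_{[n\tau,(n+1)\tau]}\norm{w}_{L^2}$, then each window contracts $\norm{w}_{L^2}$ by a fixed factor $q<1$; iterating gives geometric decay, and converting the window count to a continuous rate while absorbing the single-window transient into the constant produces $\norm{c(t)-\bar c}_{L^2}\le\beta e^{-\mu t}\norm{c_0-\bar c}_{L^2}$ once $\tau$ (hence the threshold $T_0$) is small enough relative to the prescribed $\beta,\mu$.

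Third, and this is the crux, I must control $\norm{\lap\mathcal N(w)}_{L^2}$, which in dimension $d\le 3$ forces an a priori bound on a higher norm such as $\norm{w}_{H^2}$ (since there $H^2$ is an algebra embedding into $L^\infty$, giving $\norm{\lap(w^3)}_{L^2}\le C(1+\norm{w}_{H^2})^2\norm{w}_{H^2}$). I would obtain this from the Cahn--Hilliard free energy $E[c]=\int_{\T^d}\big(\tfrac\gamma2|\grad c|^2+\tfrac14(c^2-1)^2\big)\,dx$, whose chemical potential $g\defeq c^3-c-\gamma\lap c$ lets \eqref{e:CHE} read $c_t+u\cdot\grad c=\lap g$. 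Differentiating $E$ along the flow and integrating by parts, the only term not of definite sign is the advective one, which after cancellation equals $\gamma\int(\grad c)^{T}(\grad u)\,\grad c$; hence
\[
\frac{d}{dt}E[c]\le -\norm{\grad g}_{L^2}^2+\gamma\norm{\grad u}_{L^\infty}\norm{\grad c}_{L^2}^2 .
\]
This controls $\dot H^1$ and $L^4$ of $w$, with a constant that in $d=3$ carries the factor $\norm{\grad u}_{L^\infty}$, and bootstrapping through the equation upgrades it to the needed $H^2$ bound. This is where the dimensional split enters: in $d=2$ the Gagliardo--Nirenberg estimates close with room to spare and $T_0$ depends only on the listed quantities, whereas in $d=3$ the borderline Sobolev exponents make the high-norm growth couple to the advection strength, which is exactly why the hypothesis is imposed on $(1+\norm{\grad u}_{L^\infty})^{1/2}\tau_2^*(u,\gamma)$ rather than on $\tau_2^*$ alone.

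Finally, I would close with a continuity/bootstrap step: assume $\norm{w}_{H^2}$ stays in a fixed ball on $[0,(n+1)\tau]$, use that to render the per-window nonlinear contribution small, deduce the $L^2$ contraction of the second paragraph, and feed the resulting decay back through the free-energy inequality to re-verify the $H^2$ bound on the next window. The main obstacle I anticipate is precisely this coupling: the nonlinearity is measured by a high norm that the advection can amplify, while decay is proved only in $L^2$, so everything hinges on showing that the enhanced $L^2$ dissipation (small $\tau$) beats the advective amplification of $\norm{w}_{H^2}$ --- delicate in $d=3$, where it is responsible for the weighted smallness condition on $\tau_2^*(u,\gamma)$.
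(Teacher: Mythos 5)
A preliminary remark: this paper does not prove Theorem~\ref{thm:Cahn-Hilliard} at all --- it is quoted from \cite{feng2020phase} purely as motivation for making $\tau_2^*(u,\gamma)$ small --- so the only proof your proposal can be measured against is the one in that reference. Your skeleton does match its strategy in outline: conservation of $\bar c$, iteration over windows of length $\tau_2^*(u,\gamma)$ using the definition of the dissipation time to contract the linear part by $\tfrac12$, a free-energy estimate in which the advective contribution reduces to $\gamma\int(\grad c)^{T}(\grad u)\grad c\leqslant\gamma\norm{\grad u}_{L^\infty}\norm{\grad c}_{L^2}^2$, and the correct identification of $d=3$ as the case where $\norm{\grad u}_{L^\infty}$ must enter the smallness condition.

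The genuine gap is in your third step. Bounding the Duhamel term by $\int\norm{\lap\mathcal{N}(w(s))}_{L^2}\,ds$ forces you to propagate an a priori $H^2$ bound on $w$, but the free energy only controls $\norm{\grad c}_{L^2}$ and $\norm{c}_{L^4}$, and the promised ``bootstrap through the equation'' to $H^2$ is precisely where the argument would break: the $H^2$ energy estimate for \eqref{e:CHE} couples to derivatives of $u$ multiplying $H^2$ quantities (so the bound degrades with the flow amplitude, which is exactly the regime of interest), and even granting such a bound on a finite interval you would still need the ratio $\norm{w}_{H^2}/\norm{w}_{L^2}$ to stay bounded over infinitely many windows for the per-window contraction to survive --- nothing in your argument prevents this ratio from growing as $\norm{w}_{L^2}$ decays. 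The proof in \cite{feng2020phase} avoids the issue by never measuring $\lap(c^3-c)$ in $L^2$: one tests the equation, and the equation for the difference $\psi$ between the nonlinear and linear evolutions, against $\psi$ itself and integrates by parts once, so the nonlinearity appears as $-\int(3c^2-1)\,\grad c\cdot\grad\psi$. The cubic part then contributes the good-signed term $-3\int c^2\abs{\grad\psi}^2\leqslant 0$ plus cross terms that are controlled by the free-energy ($H^1$ and $L^4$) bounds together with the interpolation $\norm{\grad\psi}_{L^2}^2\leqslant\norm{\psi}_{L^2}\norm{\lap\psi}_{L^2}$, the last factor being absorbed by the hyper-diffusion; the Sobolev exponents in this step are where the $d=2$ versus $d=3$ split genuinely enters. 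To repair your write-up, replace the crude estimate $\norm{\mathcal S^{u,2}_{s,t}\lap\mathcal{N}}_{L^2}\leqslant\norm{\lap\mathcal{N}}_{L^2}$ by this energy estimate on the difference, which closes at the regularity level the free energy actually provides.
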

    \item Advective Kuramoto-Sivashinsky equation (see \cite{feng2022global})
    \begin{equation}\label{e:KSE}
        \phi_t+u\cdot\grad\phi+\lap^2\phi=-\frac{1}{2}\abs{\grad\phi}^2-\lap\phi.
    \end{equation}
    \begin{theorem}
    \label{thm:Kuramoto-Sivashinsky equation}
    Let $d=2$, $u\in L^{\infty}((0,\infty);W^{1,\infty}(\T^2))$ and $\phi$ be the mild solution of \eqref{e:KSE} with initial data $\phi_0\in L^2(\T^2)$ (see Definition 2.1 in \cite{feng2022global} for the mild solution). Let $\norm{\phi_0-\bar{\phi}}_{L^2}=B>0$. There exists a time $T_2(B)$ such that if $\tau_2^*(u,1)<T_2(B)$, then \eqref{e:KSE} admits a global mild solution.
    \end{theorem}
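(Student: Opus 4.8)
The plan is to produce a global-in-time a priori bound on the mean-zero part of $\phi$ in $L^2$ and then combine it with the local well-posedness of mild solutions from \cite{feng2022global} via a continuation argument. Writing $\psi\defeq\phi-\bar{\phi}$ and using that $u$ is divergence free (so $\langle u\cdot\grad\psi,\psi\rangle=0$ and the advection term contributes nothing to the evolution of $\bar{\phi}$), equation \eqref{e:KSE} reduces to
\begin{equation*}
\partial_t\psi+u\cdot\grad\psi+\lap^2\psi=-\lap\psi-\tfrac12\bigl(\abs{\grad\psi}^2-\overline{\abs{\grad\psi}^2}\bigr),
\end{equation*}
and I would work with the Duhamel representation
\begin{equation*}
\psi(t)=\mathcal S^{u,2}_{0,t}\psi_0+\int_0^t \mathcal S^{u,2}_{s,t}\Bigl[-\lap\psi(s)-\tfrac12\bigl(\abs{\grad\psi(s)}^2-\overline{\abs{\grad\psi(s)}^2}\bigr)\Bigr]\,ds .
\end{equation*}
The decisive point is that the advection enters \emph{only} through the propagator $\mathcal S^{u,2}$, so the usual two-dimensional Kuramoto--Sivashinsky instability must be overcome by the decay that smallness of $\tau_2^*$ forces on $\mathcal S^{u,2}$.

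I would establish two ingredients. First, composing the definition \eqref{e:dtimeDef} over consecutive windows of length $\tau_2^*$ and using that $\mathcal S^{u,2}$ is an $L^2$ contraction (the analogue of \eqref{e:L2 rate} with $\alpha=2$) yields the exponential decay
\begin{equation*}
\norm{\mathcal S^{u,2}_{s,t}}_{L^2_0\to L^2_0}\leqslant 2\,e^{-\mu(t-s)},\qquad \mu\defeq\frac{\ln 2}{\tau_2^*(u,1)},
\end{equation*}
so that $\mu\to\infty$ as $\tau_2^*\to 0$. Second, the fourth-order dissipation supplies a smoothing estimate that I would combine with the decay in the schematic form
\begin{equation*}
\norm{\mathcal S^{u,2}_{s,t}g}_{L^2}\leqslant C\,(t-s)^{-\beta/4}\,e^{-\mu(t-s)/2}\,\norm{g}_{\dot H^{-\beta}},\qquad 0\leqslant\beta<4,
\end{equation*}
so that the destabilizing term $\lap\psi$ (take $\beta=2$, using $\norm{\lap\psi}_{\dot H^{-2}}=\norm{\psi}_{L^2}$) and the quadratic term can be transferred onto the propagator at the cost of an integrable time singularity.

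To close the estimate I would run a continuity argument in a norm controlling both $\sup_t\norm{\psi(t)}_{L^2}$ and a time-weighted higher norm (so that the gradient nonlinearity is captured). The anti-diffusive term produces the convolution $\int_0^\infty \tau^{-1/2}e^{-\mu\tau/2}\,d\tau=C\mu^{-1/2}$, small precisely when $\tau_2^*$ is small; the nonlinearity, placed in $\dot H^{-\beta}$ with $0<\beta<1$ and estimated through the two-dimensional Gagliardo--Nirenberg bound $\norm{\abs{\grad\psi}^2}_{\dot H^{-\beta}}\leqslant C\norm{\psi}_{\dot H^{(3-\beta)/2}}^2$ together with interpolation between $L^2$ and $\dot H^2$, produces a convolution factor of order $\mu^{\beta/4-1}$, again small. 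Choosing $T_2(B)$ so small that for $\tau_2^*<T_2(B)$ these contributions are dominated by $\tfrac12$ of the working norm on a ball of radius comparable to $2B$, the a priori bound $\sup_t\norm{\psi(t)}_{L^2}\leqslant 2B$ persists and the local mild solution extends to all times.

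The main obstacle is ingredient two at higher regularity. A genuine $\dot H^s$ smoothing estimate for $\mathcal S^{u,2}$ with $s\geqslant1$ ordinarily generates advective commutator terms of the form $\langle u\cdot\grad\theta,(-\lap)^{s/2}\theta\rangle$ that carry $\norm{\grad u}_{L^\infty}$, whereas the statement requires $T_2$ to depend on $B$ alone. The crux is therefore to keep the smoothing order strictly below the threshold at which $\norm{\grad u}$ becomes unavoidable, exploiting the divergence-free and mean-zero structure so that the problematic commutators cancel or are absorbed, and to use the fact that $\mu\to\infty$ dominates any fixed $\norm{\grad u}$ in the decay exponent. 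Arranging the precise Gagliardo--Nirenberg exponents so that this low-order smoothing suffices in two dimensions — where it does not obviously close in three, matching the dimensional restriction in the statement — is where the real work lies.
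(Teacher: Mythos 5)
This theorem is not proved in the present paper at all: it is quoted verbatim from \cite{feng2022global}, so the comparison is against the argument there, which is an energy--dichotomy argument, not a semigroup perturbation argument. Your first ingredient is fine and is indeed used: iterating \eqref{e:dtimeDef} over windows of length $\tau_2^*$ gives $\norm{\mathcal S^{u,2}_{s,t}}_{L^2_0\to L^2_0}\leq 2e^{-\mu(t-s)}$ with $\mu=\ln 2/\tau_2^*$. The genuine gap is your second ingredient, the smoothing estimate $\norm{\mathcal S^{u,2}_{s,t}g}_{L^2}\leq C(t-s)^{-\beta/4}e^{-\mu(t-s)/2}\norm{g}_{\dot H^{-\beta}}$ with $C$ independent of $u$. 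By duality this is equivalent to $L^2\to\dot H^{\beta}$ smoothing of the adjoint advective propagator with the same constant, and the energy estimate for $\norm{\theta}_{\dot H^{\beta}}$ unavoidably picks up the commutator $[(-\lap)^{\beta/2},\,u\cdot\grad]$, hence a factor like $e^{C\norm{\grad u}_{L^\infty}(t-s)}$, for \emph{every} $\beta>0$; incompressibility only kills the transport term at the $L^2$ level ($\langle u\cdot\grad\theta,\theta\rangle=0$), and there is no lower ``threshold order'' below which the commutator disappears. You flag this yourself as ``where the real work lies,'' but it is not a technicality to be arranged by tuning Gagliardo--Nirenberg exponents: it is the same obstruction that makes Lemma \ref{lem:relation btw tau_1and2} useless for cellular flows. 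If you let the smoothing constant depend on $\norm{u}_{W^{1,\infty}}$, your bootstrap proves only a weaker statement with a hypothesis of the form $\norm{u}_{W^{1,\infty}}^{a}\tau_2^*(u,1)<T$, of the type the paper explicitly singles out (after Theorem \ref{thm:thin film equation}) as not known to be realizable by time-independent flows; the stated theorem requires $T_2$ to depend on $B$ alone.

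The proof in \cite{feng2022global} never asks the advective propagator for smoothing. One works with the energy identity $\frac{d}{dt}\norm{\psi}_{L^2}^2=-2\norm{\psi}_{\dot H^2}^2+2\norm{\grad\psi}_{L^2}^2-\langle\abs{\grad\psi}^2,\psi\rangle$, absorbs the destabilizing and nonlinear terms into a fraction of $\norm{\psi}_{\dot H^2}^2$ via the two-dimensional Gagliardo--Nirenberg bound $\abs{\langle\abs{\grad\psi}^2,\psi\rangle}\leq C\norm{\psi}_{L^2}^{3/2}\norm{\psi}_{\dot H^2}^{3/2}$ (this is where the constant, and hence $T_2$, acquires its dependence on $B$), and then runs the standard dichotomy on each window of length comparable to $\tau_2^*$: either $\int\norm{\psi}_{\dot H^2}^2\,dt$ is large relative to $\norm{\psi}_{L^2}^2$, in which case the dissipation alone forces decay, or it is small, in which case $\psi$ stays $L^2$-close to the linear advective evolution, which is halved by definition of $\tau_2^*$. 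Only the $L^2$ contraction and the halving property of $\mathcal S^{u,2}$ enter, so no constant depending on $u$ ever appears. As written, your proposal does not close without the unproven uniform smoothing estimate, so it has a genuine gap rather than being an alternative proof.
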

    \item Advective thin-film equation (see \cite{feng2022suppression})
    \begin{equation}
\label{e:thin film eqn}
    \partial_t h + u\cdot\grad h + \lap^2 h = -\grad\cdot(\abs{\grad h}^{p-2}\grad h).
\end{equation}
\begin{theorem}
\label{thm:thin film equation}
For $2<p<3$, $u\in L^{\infty}\left([0,\infty), W^{1,\infty}(\T^d)\right)$, and $\mu>0$. Let $h$ be the mild solution of \eqref{e:thin film eqn} with initial data $h(0)=h_0\in L_0^2$. There exists a threshold value 
\begin{equation*}
T_1=T_1(\norm{h_0}_{L^2}, \mu, p)  
\end{equation*}
such that if 
\begin{equation*}\label{e:flow condition}
\left(\norm{u}_{L^\infty}\left(\tau_2^*(u,1)\right)^{\frac{5}{4}}+\left(\tau_2^*(u,1)\right)^{\frac{3}{4}}\right)\leqslant T_1(\norm{h_0}_{L^2}, \mu, p),
\end{equation*}
then there exists a constant $\beta>0$, such that for any $t>0$ it holds
\begin{equation*}
    \norm{h(t)}_{L^2}\leq\beta e^{-\mu t}\norm{h_0}_{L^2}.
\end{equation*}
\end{theorem}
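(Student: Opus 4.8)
The plan is to treat \eqref{e:thin film eqn} as a perturbation of the linear advective hyper-diffusion flow $\partial_t g+u\cdot\grad g+\lap^2 g=0$, whose solution operator is $\mathcal S^{u,2}_{r,t}$, and to iterate over time intervals of length $\tau\defeq\tau_2^*(u,1)$. First I would verify that the mean of $h$ is conserved: integrating \eqref{e:thin film eqn} over $\T^d$ and using $\grad\cdot u=0$ kills every term, so $h(t)\in L^2_0$ for all $t$ and Definition \ref{def:dissipationTime} applies, giving $\norm{\mathcal S^{u,2}_{s,s+\tau}}_{L^2_0\to L^2_0}\le\tfrac12$ and hence $\norm{\mathcal S^{u,2}_{s,s+k\tau}}_{L^2_0\to L^2_0}\le 2^{-k}$. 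Writing $F\defeq\abs{\grad h}^{p-2}\grad h$, Duhamel's formula on one interval reads
\begin{equation*}
h(s+\tau)=\mathcal S^{u,2}_{s,s+\tau}h(s)-\int_s^{s+\tau}\mathcal S^{u,2}_{r,s+\tau}\,\grad\cdot F(r)\,dr ,
\end{equation*}
so that
\begin{equation*}
\norm{h(s+\tau)}_{L^2}\le\tfrac12\norm{h(s)}_{L^2}+\int_s^{s+\tau}\norm{\mathcal S^{u,2}_{r,s+\tau}\grad\cdot F(r)}_{L^2}\,dr .
\end{equation*}
The first term is the gain furnished by enhanced dissipation; all the work lies in showing the nonlinear Duhamel integral is a small fraction of $\norm{h(s)}_{L^2}$.

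Second, to bound that integral I would establish smoothing estimates for the nonautonomous semigroup $\mathcal S^{u,2}$ with explicit dependence on both the elapsed time and $\norm{u}_{L^\infty}$. The free biharmonic semigroup obeys $\norm{e^{-t\lap^2}\grad\cdot G}_{L^2}\le Ct^{-1/4}\norm{G}_{L^2}$ and $\norm{e^{-t\lap^2}f}_{\dot H^{s}}\le Ct^{-s/4}\norm{f}_{L^2}$; an energy/commutator argument upgrades these to $\mathcal S^{u,2}$ at the price of an additive correction carrying a factor $\norm{u}_{L^\infty}$. Writing $\norm{F(r)}_{L^2}=\norm{\grad h(r)}_{L^{2(p-1)}}^{p-1}$, a Gagliardo--Nirenberg interpolation $\norm{\grad h}_{L^{2(p-1)}}\le C\norm{h}_{\dot H^{s}}^{\theta}\norm{h}_{L^2}^{1-\theta}$ together with the $\dot H^{s}$-smoothing (replacing $\norm{h(r)}_{\dot H^{s}}$ by $C(r-s)^{-s/4}\norm{h(s)}_{L^2}$ up to a controlled nonlinear correction) reduces everything to integrating a kernel of the form $(s+\tau-r)^{-1/4}(r-s)^{-\theta s/4}$ in $r$. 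A careful accounting of these time powers, and of the $\norm{u}_{L^\infty}$ correction, produces exactly the combination $\norm{u}_{L^\infty}\tau^{5/4}+\tau^{3/4}$ from the hypothesis, multiplied by a superlinear function $\Psi$ of $\sup_{[s,s+\tau]}\norm{h}_{L^2}$. The restriction $2<p<3$ is precisely what keeps $\theta$ subcritical, so that the singular kernel stays integrable near $r=s$ and the nonlinear contribution is genuinely of higher order in the norm.

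Collecting the two estimates gives, on every interval,
\begin{equation*}
\norm{h(s+\tau)}_{L^2}\le\tfrac12\norm{h(s)}_{L^2}+C\left(\norm{u}_{L^\infty}\tau^{5/4}+\tau^{3/4}\right)\Psi\!\left(\sup_{[s,s+\tau]}\norm{h}_{L^2}\right),
\end{equation*}
with $\Psi(0)=0$. I would then choose the threshold $T_1=T_1(\norm{h_0}_{L^2},\mu,p)$ so small that, as long as $\norm{h(s)}_{L^2}\le\norm{h_0}_{L^2}$, the smallness $\norm{u}_{L^\infty}\tau^{5/4}+\tau^{3/4}\le T_1$ forces $\norm{h(s+\tau)}_{L^2}\le\rho\norm{h(s)}_{L^2}$ for a fixed $\rho\in(\tfrac12,1)$; because $\Psi$ is superlinear the admissible threshold must shrink as $\norm{h_0}_{L^2}$ grows, which explains the dependence of $T_1$ on $\norm{h_0}_{L^2}$. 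A continuity/bootstrap argument that also controls the within-interval growth (keeping $\sup_{[s,s+\tau]}\norm{h}_{L^2}$ comparable to $\norm{h_0}_{L^2}$) propagates the contraction to all intervals, yielding $\norm{h(k\tau)}_{L^2}\le\rho^{k}\norm{h_0}_{L^2}$ and, incidentally, global existence of the mild solution. Since $\tau^{3/4}\le T_1$ forces $\tau$ small, one has $\tfrac1\tau\log\tfrac1\rho\ge\mu$ once $T_1$ is chosen small in terms of $\mu$; interpolating this geometric decay at the endpoints $t=k\tau$ with the bounded growth inside each interval gives $\norm{h(t)}_{L^2}\le\beta e^{-\mu t}\norm{h_0}_{L^2}$ for a suitable $\beta$.

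The hard part will be the second step. Because the nonlinearity $-\grad\cdot(\abs{\grad h}^{p-2}\grad h)$ feeds energy into the equation (one computes $\langle-\grad\cdot(\abs{\grad h}^{p-2}\grad h),h\rangle=\int_{\T^d}\abs{\grad h}^p\ge 0$) rather than dissipating it, a pure energy method is unavailable and one is forced through the Duhamel representation. The delicate points are then obtaining the sharp elapsed-time powers together with the explicit $\norm{u}_{L^\infty}$-dependence in the nonautonomous smoothing estimates for $\mathcal S^{u,2}$, and closing the degenerate, superlinear $p$-Laplacian estimate uniformly over the whole range $2<p<3$.
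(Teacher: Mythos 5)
The paper does not actually prove Theorem \ref{thm:thin film equation}: it is quoted verbatim from \cite{feng2022suppression} as an application motivating the smallness of $\tau_2^*(u,1)$, and no argument for it appears in this manuscript. So there is no in-paper proof to compare against; what can be said is whether your outline is the kind of argument that establishes results of this type. It is. The skeleton --- conserve the mean so that $h(t)\in L_0^2$, run Duhamel against the linear solution operator $\mathcal S^{u,2}_{s,t}$ on windows of length $\tau\defeq\tau_2^*(u,1)$, use $\norm{\mathcal S^{u,2}_{s,s+\tau}}_{L_0^2\to L_0^2}\le\tfrac12$ for the linear part, beat the Duhamel term by smallness of the hypothesis, and bootstrap a contraction across windows before converting the geometric decay at times $k\tau$ into $\beta e^{-\mu t}$ --- is exactly the mechanism used throughout this literature (it is the same template as the Cahn--Hilliard and Kuramoto--Sivashinsky applications cited alongside this theorem), and your observation that $\ip{-\grad\cdot(\abs{\grad h}^{p-2}\grad h),h}=\int_{\T^d}\abs{\grad h}^p\ge 0$ correctly identifies why a pure energy method fails and why the enhanced dissipation is needed at all.

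The genuine thin spot is the second step, and you have named it yourself but not closed it. Two sub-issues deserve emphasis. First, the smoothing estimates you invoke, $\norm{e^{-t\lap^2}\grad\cdot G}_{L^2}\le Ct^{-1/4}\norm{G}_{L^2}$ and the $\dot H^s$ gain $t^{-s/4}$, are for the free biharmonic semigroup; transferring them to the nonautonomous operator $\mathcal S^{u,2}_{r,t}$ with an explicit additive $\norm{u}_{L^\infty}$ correction is itself a lemma requiring proof (a Duhamel expansion of $\mathcal S^{u,2}$ around $e^{-(t-r)\lap^2}$ with the transport term as forcing), and the claim that the bookkeeping ``produces exactly'' $\norm{u}_{L^\infty}\tau^{5/4}+\tau^{3/4}$ is asserted rather than derived --- these exponents are the entire content of the hypothesis, so a complete proof must actually track them. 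Second, you apply the $\dot H^s$ smoothing to the nonlinear solution $h(r)$ itself (``replacing $\norm{h(r)}_{\dot H^s}$ by $C(r-s)^{-s/4}\norm{h(s)}_{L^2}$ up to a controlled nonlinear correction''); since $h$ solves the nonlinear equation, that replacement is not an estimate but a second fixed-point/bootstrap in a weighted-in-time space, which must be set up and closed uniformly in $p\in(2,3)$ before the window-by-window contraction can even be stated. Neither issue is a wrong turn --- both are resolved in \cite{feng2022suppression} --- but as written they are the places where your proposal is a plan rather than a proof.
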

\end{enumerate}

\begin{remark}
Theorem \ref{thm:main} can be used to show the existence of cellular flows satisfy the conditions for the two-dimensional Cahn-Hilliard equation (Theorem \ref{thm:Cahn-Hilliard} part (a)) and the Advective Kuramoto-Sivashinsky equation (Theorem \ref{thm:Kuramoto-Sivashinsky equation}). However, both three dimensional Cahn-Hilliard equation (Theorem \ref{thm:Cahn-Hilliard} part (b)) and the Advective thin-film equation (Theorem \ref{thm:thin film equation}) further requires the smallness of $\norm{u}_{H}^{\beta}\tau_2^*(u,1)$ for some $0<\beta<1$ and $H$ to be some Sobolev space. The existence of time-independent flow that satisfies such conditions is not clear and remains to be further investigated.

\end{remark}

\subsection{Applications to nonlinear diffusion equations}
In this section, we show how to generalize the dissipation enhancement results of cellular flows to nonlinear diffusion equations. 

Firstly, we consider the so-called \emph{porous medium equations} on $\T^2$:
\begin{equation}
\label{eqn:adv pme}
\partial_t\theta+u\cdot\nabla\theta-\nu \lap (\theta^{q})=0,\quad \theta(x,0)=\theta_0(x),
\end{equation}
where $q>1,\nu>0$, and $u$ is a divergence-free vector field. In \cite{ KiselevShterenbergEA08}, the authors showed that for time-periodic flows, if its unitary evolution operator (see equations (1.3)-(1.5) in \cite{ KiselevShterenbergEA08}) has no eigenfunctions in $H^1$, then the time-periodic flow is \emph{relaxation-enhancing} to \eqref{eqn:adv pme}. Our conclusion is similar to that in \cite{ KiselevShterenbergEA08}, but we consider $u$ is a time independent cellular flow. Consider the same class of initial data as in  \cite{ KiselevShterenbergEA08}, that is: $0<h\leq\theta_0\leq h^{-1}$. Then the classical theory (see, e.g., \cite{Vazquez2006}) yields there exists a unique solution correspond to $\theta_0$ provided $u\in C^{\infty}(\T^2)$. Further more, the maximum principle guarantees $h\leq \theta\leq h^{-1}$. Also observe that the average $\bar{\theta}=\bar{\theta}_0$ is still preserved by \eqref{eqn:adv pme}. We have following dissipation enhancement kind result for cellular flows.
\begin{theorem}
\label{thm:RH for pme}
Consider equation \eqref{eqn:adv pme} with $0<h\leq\theta_0\leq h^{-1}$ and $\norm{\theta_0-\bar{\theta}_0}_{L^2}=1$. Then for any $\tau>0$, there exist cellular flows with proper cell size and amplitude (depend on $\tau$, $h$, and $\nu$) such that  
\begin{equation}
    \norm{\theta(x,\tau)-\bar{\theta}}_{L^2}\leq\frac{1}{2}.
\end{equation}
\end{theorem}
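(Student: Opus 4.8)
The plan is to reduce the nonlinear problem to the linear dissipation-time machinery already assembled for Theorem~\ref{thm:main}, by treating \eqref{eqn:adv pme} as a linear advection-diffusion equation with a variable but uniformly elliptic diffusion coefficient --- the same philosophy as \cite{KiselevShterenbergEA08}. Writing $w \defeq \theta-\bar\theta$ and using $\lap(\theta^q)=\nabla\cdot(a\nabla\theta)$ with $a\defeq \nu q\,\theta^{q-1}$, the maximum principle bound $h\le\theta\le h^{-1}$ gives the two-sided control $\nu q\,h^{q-1}\le a\le \nu q\,h^{1-q}$, so the equation is in fact uniformly parabolic. I would first rescale time by $s=At$ and set $\epsilon\defeq \nu/A$, so that for the flow $v=Au(m\cdot)$ the evolution becomes $\partial_s\theta+u(m\cdot)\cdot\nabla\theta=\epsilon\lap(\theta^q)$, with $\FL_m=iu(m\cdot)\cdot\grad$ as before and $\epsilon\to0$ as $A\to\infty$. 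The goal $\norm{w(\tau)}_{L^2}\le\tfrac12\norm{w(0)}_{L^2}$ is then exactly the statement that the (nonlinear) dissipation time of $v$ does not exceed $\tau$, i.e. that the $s$-dissipation time is at most $A\tau$.

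Next I would record the energy identity. Testing the rescaled equation against $w$ and using $\nabla\cdot u=0$ to kill the transport term gives $\frac{d}{ds}\norm{w}_{L^2}^2=-2\epsilon\!\int q\theta^{q-1}|\nabla w|^2\,dx\le -2\epsilon q h^{q-1}\norm{w}_{\dot H^1}^2$, the exact analog of \eqref{e:L2 rate} with $\epsilon$ replaced by $\epsilon q h^{q-1}$; Lemma~\ref{lem:trivial case} therefore transfers verbatim. The three inviscid ingredients --- Lemma~\ref{lem:large_H1}, Lemma~\ref{lem:H1 mean large} and the RAGE-type Lemma~\ref{lem:RAGE thm} --- concern only the transport operator $\FL_m$ and its unitary group $e^{i\FL_m s}$, which are untouched by the nonlinearity, so they carry over without change. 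Thus the only step of the Appendix proof of Theorem~\ref{thm:main} that genuinely sees the nonlinear diffusion is the difference estimate.

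The main obstacle, and the one new computation, is therefore the analog of Lemma~\ref{lem:difference estimate}: comparing the viscous solution $\theta^\epsilon$ to the pure-transport solution $\theta^0$ solving $\partial_s\theta^0=i\FL_m\theta^0$. Setting $d=\theta^0-\theta^\epsilon$ and integrating by parts, the skew-adjointness of $\FL_m$ removes the transport contribution and leaves
\begin{equation*}
\tfrac12\tfrac{d}{ds}\norm{d}_{L^2}^2=\epsilon\!\int q(\theta^\epsilon)^{q-1}\nabla\theta^\epsilon\cdot\nabla\theta^0\,dx-\epsilon\!\int q(\theta^\epsilon)^{q-1}|\nabla\theta^\epsilon|^2\,dx.
\end{equation*}
The key point is that the dangerous cross term, which involves the uncontrolled gradient $\nabla\theta^\epsilon$, can be tamed by Young's inequality (with weight $q(\theta^\epsilon)^{q-1}$) so that its $\nabla\theta^\epsilon$ part is absorbed into the genuinely dissipative negative second term, leaving only $\tfrac{\epsilon}{2}\int q(\theta^\epsilon)^{q-1}|\nabla\theta^0|^2\,dx\le \tfrac{\epsilon}{2}q h^{1-q}\norm{\theta^0(s)}_{\dot H^1}^2$. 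Since $\norm{\theta^0(s)}_{\dot H^1}=\norm{e^{i\FL_m s}\theta_0}_{\dot H^1}\le F_m(s)\norm{\theta_0}_{\dot H^1}$ by \eqref{e:F_m condition}, this reproduces Lemma~\ref{lem:difference estimate} with the constant $q h^{1-q}$ in place of $1$, \emph{without} ever bounding $\nabla\theta^\epsilon$. This absorption trick is what replaces the linear structure lost in the nonlinear setting, and I expect it to be the crux of the argument.

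Finally I would assemble the pieces exactly as in the Appendix proof of Theorem~\ref{thm:main} (with $\gamma$ replaced throughout by the constants $\nu q h^{q-1}$ and $\nu q h^{1-q}$): decompose $\theta_0$ into the continuous and discrete spectral parts of $\FL_m$, use Lemmas~\ref{lem:H1 mean large} and~\ref{lem:RAGE thm} to force a time-averaged lower bound of the form $\norm{w}_{\dot H^1}^2\ge N\norm{w}_{L^2}^2$, and feed this into the decay estimate of Lemma~\ref{lem:trivial case}. One minor technical point to address is that the initial datum is only assumed bounded, not in $\dot H^1$; but since $a\ge\nu q h^{q-1}>0$ the flow is uniformly parabolic, so classical quasilinear theory (e.g. \cite{Vazquez2006}) makes $\theta(\cdot,t_0)$ smooth for any $t_0>0$, and as $\norm{w}_{L^2}$ is nonincreasing one may simply restart the argument from a small $t_0\ll\tau$. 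Choosing first the cell size $1/m$ small and then the amplitude $A$ large, with all thresholds depending on $\tau$, $h$, $\nu$ (and the fixed $q$), yields the claimed halving of the norm by time $\tau$.
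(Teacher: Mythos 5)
Your proposal is correct and follows essentially the same route as the paper: both reduce the problem to the linear machinery by noting that $h\le\theta\le h^{-1}$ makes the weighted Dirichlet energy $\int\theta^{q-1}|\nabla\theta|^2\,dx$ equivalent to the $\dot H^1$ norm, observe that the inviscid spectral lemmas are untouched by the nonlinearity, and isolate the analog of Lemma~\ref{lem:difference estimate} as the only new ingredient, proving it by exactly the same absorption of the cross term $\int(\theta^\epsilon)^{q-1}\nabla\theta^\epsilon\cdot\nabla\theta^0\,dx$ into the dissipative term (the paper imports this computation from \cite{KiselevShterenbergEA08}). Your additional remark on smoothing a merely bounded initial datum before starting the argument is a harmless refinement, not a different approach.
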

\begin{remark}
Note that to compare with the linear case (that is, Theorem \ref{thm:main}), the main difference is that the cellular flows in Theorem \ref{thm:RH for pme} depend on the initial data $\theta_0$, or more precisely on $h$.
\end{remark}
\begin{proof}
Firstly, observe that for any $0<h\leq\psi\leq h^{-1}$, the expression $\int_{\T^2}\psi^{q-1}\abs{\grad\psi}^2dx$ is equivalent to the $H^1(\T^2)$ norm. Then the proof is almost the same as the linear case, except we need to establish a parallel estimate for Lemma \ref{lem:difference estimate}. For any cellular flow with cell size $m$: $u_m=u(mx)$, let $\theta_m^{\nu}$ denote the corresponding solution to \eqref{eqn:adv pme} and $\theta_{m}^0$ denotes the solution to the inviscid problem as in Lemma \ref{lem:difference estimate}. The authors in \cite{ KiselevShterenbergEA08} showed that 
\begin{align*}
 &\frac{d}{dt}\norm{\theta_m^{\nu}(t)-\theta_m^0(t)}_{L^2}^2
 \leq 2\nu\int_{\T^2}\lap (\theta_m^{\nu})^q(\theta_m^{\nu}-\theta_m^0)dx\\
 &\leq 2\nu q \left(\int_{\T^2}(\theta_m^{\nu})^{q-1}\abs{\grad\theta_m^{\nu}}^2 dx\right)^{1/2}\left(\int_{\T^2}(\theta_m^{\nu})^{q-1}\abs{\grad \theta_m^0}^2 dx\right)^{1/2}\\
 &\qquad-2\nu q \int_{\T^2}(\theta_m^{\nu})^{q-1}\abs{\grad\theta_m^{\nu}}^2 dx\\
 &\leq \frac{\nu q}{2}\int_{\T^2}(\theta_m^{\nu})^{q-1}\abs{\grad\theta_m^0}^2 dx\\
 &\leq\frac{\nu q h^{1-q}}{2} F_m^2(t)\norm{\grad\theta_0}_{L^2}^2.
\end{align*}
Here $F_m(t)$ is as in Lemma \ref{lem:difference estimate}. The remaining proof is almost the same as the linear case, only with some estimates may depend on $h$, so we omit them.
\end{proof}

The second nonlinear diffusion equation we consider is the advective p-Laplacian equation: 
\begin{equation}
\label{e:adv p-laplacian}
\partial_t\vartheta+u\cdot\nabla\vartheta-\nu \grad \cdot \left(\abs{\grad \vartheta }^{p-2} \grad \vartheta \right)=0,\quad \vartheta(x,0)=\vartheta_0(x),
\end{equation}
where $p>2$ and $\nu>0$. In the recent paper \cite{feng2021dissipation}, the author studied dissipation enhancement effect of time-dependent mixing flows to the weak solutions of \eqref{e:adv p-laplacian}. For simplicity, consider $\vartheta_0\in L_0^2(\T^2)$, then \eqref{e:adv p-laplacian} possesses a unique weak solution $\vartheta\in L_0^2(\T^2)$ (see Section 3 in \cite{feng2021dissipation}). With the same spirit as the \emph{porous medium equation}, the result in \cite{feng2021dissipation} can be easily generalized to the cellular flow scenario. Let $u$ be a cellular flow, we have:
\begin{theorem}
\label{thm:RH for plap}
Consider equation \eqref{e:adv p-laplacian} with $\vartheta_0\in L_0^2(\T^2)$ and $\norm{\vartheta_0}_{L^2}=1$. Then for any $\tau>0$, there exist cellular flows with proper cell size and amplitude (depend on $\tau,\norm{\vartheta_0}_{L^2}$, and $\nu$) such that  
\begin{equation}
    \norm{\vartheta(x,\tau)}_{L^2}\leq\frac{1}{2}.
\end{equation}
\end{theorem}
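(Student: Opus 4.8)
The plan is to mirror the proof of Theorem~\ref{thm:RH for pme} together with the linear template (the proof of Theorem~\ref{thm:main} given in the appendix), isolating the two places where the degenerate nonlinearity forces a change: the analogue of the decay Lemma~\ref{lem:trivial case} and the analogue of the difference estimate Lemma~\ref{lem:difference estimate}. Testing \eqref{e:adv p-laplacian} against $\vartheta$ and using that $u$ is divergence free gives the basic energy identity
\begin{equation}
\frac{d}{dt}\norm{\vartheta}_{L^2}^2=-2\nu\norm{\grad\vartheta}_{L^p}^p .
\end{equation}
Since $\abs{\T^2}=1$, H\"older gives $\norm{\grad\vartheta}_{L^2}\leq\norm{\grad\vartheta}_{L^p}$, and Poincar\'e gives $\lambda_1^{1/2}\norm{\vartheta}_{L^2}\leq\norm{\grad\vartheta}_{L^2}$. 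Hence on any interval where the high frequencies dominate, say $\norm{\grad\vartheta}_{L^2}^2\geq N\norm{\vartheta}_{L^2}^2$, and where $\norm{\vartheta}_{L^2}\geq\tfrac12$, one gets $\norm{\grad\vartheta}_{L^p}^p\geq N\norm{\vartheta}_{L^2}^2(\lambda_1\norm{\vartheta}_{L^2}^2)^{(p-2)/2}\geq c_p N\norm{\vartheta}_{L^2}^2$, so the energy contracts by $e^{-2\nu c_p N(b-a)}$ exactly as in Lemma~\ref{lem:trivial case}. Restricting to $\norm{\vartheta}_{L^2}\geq\tfrac12$ is harmless because the theorem only asks to drive the energy below $\tfrac12$.

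Next I would establish the nonlinear difference estimate. Writing $\vartheta_m^\nu,\vartheta_m^0$ for the viscous and inviscid solutions as in Lemma~\ref{lem:difference estimate} and $w=\vartheta_m^\nu-\vartheta_m^0$, the divergence-free transport term drops out and integration by parts yields
\begin{equation}
\frac{d}{dt}\norm{w}_{L^2}^2=-2\nu\norm{\grad\vartheta_m^\nu}_{L^p}^p+2\nu\int_{\T^2}\abs{\grad\vartheta_m^\nu}^{p-2}\grad\vartheta_m^\nu\cdot\grad\vartheta_m^0\,dx .
\end{equation}
Bounding the cross term by H\"older with exponents $\tfrac{p}{p-1},p$ and then Young's inequality absorbs a fraction of $\norm{\grad\vartheta_m^\nu}_{L^p}^p$ into the negative term, leaving
\begin{equation}
\norm{\vartheta_m^\nu(t)-\vartheta_m^0(t)}_{L^2}^2\leq C_p\,\nu\int_0^t\norm{\grad\vartheta_m^0(s)}_{L^p}^p\,ds ,
\end{equation}
the exact counterpart of the inequality proved in Theorem~\ref{thm:RH for pme}, now with $L^p$ gradients.

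It then remains to control the growth of $\norm{\grad\vartheta_m^0}_{L^p}$ along the inviscid transport $\vartheta_m^0=e^{i\FL_m t}\vartheta_0$. Here the $L^2$-Sobolev factor $F_m$ of Lemma~\ref{lem:difference estimate} no longer suffices, since for $p>2$ the $W^{1,p}$ norm is strictly stronger than $\dot H^1$; instead I would use the embedding $\dot H^{2-2/p}(\T^2)\hookrightarrow W^{1,p}(\T^2)$ together with the transport growth factor of $e^{i\FL_m t}$ in $\dot H^{2-2/p}$, which is finite and locally bounded in time because $u(m\cdot)$ is smooth. This bounds $\int_0^\tau\norm{\grad\vartheta_m^0}_{L^p}^p\,ds$ by a constant times $\norm{\vartheta_0}_{\dot H^{2-2/p}}^p$; the gap with the hypothesis $\vartheta_0\in L_0^2$ is closed by first regularizing $\vartheta_0$ (or invoking the instantaneous smoothing of the equation) and using $L^2$-stability of the solution map. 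With these two modifications in hand the argument proceeds as in the linear template: decompose $\vartheta_0=P_p\vartheta_0+P_c\vartheta_0$, use Lemma~\ref{lem:H1 mean large} and the RAGE-type Lemma~\ref{lem:RAGE thm} to produce, on a suitable time window, a lower bound $\norm{\grad\vartheta_m^0}_{L^2}^2\geq N\norm{\vartheta_m^0}_{L^2}^2$ for large $N$; transfer it to $\vartheta_m^\nu$ through the difference estimate; and feed the result into the decay mechanism above. Choosing the cell size $m$ large and then the amplitude $A$ large (equivalently the rescaled diffusivity small), with thresholds depending on $\tau$, $\nu$, and $\norm{\vartheta_0}_{L^2}$, forces $\norm{\vartheta(\tau)}_{L^2}\leq\tfrac12$.

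The main obstacle I expect is exactly this super-quadratic, $L^p$-gradient dissipation. Unlike the porous-medium case, where the degeneracy was tamed by the pointwise bounds $h\leq\theta\leq h^{-1}$ and the dissipation stayed comparable to $\norm{\grad\theta}_{L^2}^2$, here one must simultaneously (i) reconcile the homogeneity mismatch between $\norm{\grad\vartheta}_{L^p}^p$ and the $L^2$ energy, which is why the decay estimate is only run while $\norm{\vartheta}_{L^2}\geq\tfrac12$, and (ii) control the transport growth of the inviscid solution in the stronger $W^{1,p}$ norm, which lies off the $L^2$-Sobolev scale on which the spectral lemmas are stated, while only assuming $\vartheta_0\in L_0^2$.
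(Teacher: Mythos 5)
Your strategy coincides with the paper's: reduce to the linear template by combining the elementary inequality $\norm{\grad\psi}_{L^p}\geq \norm{\grad\psi}_{L^2}$ (the paper's \eqref{e:Lp and L2}) with a nonlinear analogue of Lemma \ref{lem:difference estimate}, and then rerun the appendix argument. The differences lie in two sub-steps. For the difference estimate, the paper simply imports \eqref{e:estimate between plap and transport} from Lemma 4.1 of \cite{feng2021dissipation}, where the growth of $\norm{\grad\vartheta_m^0(t)}_{L^p}$ is controlled directly through the Lagrangian flow map (whence the factor $e^{2\norm{\grad u_m}_{L^{\infty}}t}\norm{\grad\vartheta_0}_{L^p}^p$); your route through $\dot{H}^{2-2/p}(\T^2)\hookrightarrow W^{1,p}(\T^2)$ and the transport growth in $\dot{H}^{2-2/p}$ is also valid but is a detour. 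More substantively, your mollification/smoothing step for $L_0^2$ data is avoidable and is the weakest link as written (instantaneous $W^{1,p}$ smoothing for the degenerate $p$-Laplacian under strong advection, uniformly in the amplitude, is not obvious): if you instead run the dichotomy of Lemma \ref{lem:trivial case} directly with the quantity $\norm{\grad\vartheta}_{L^p}^p$ against $N\norm{\vartheta}_{L^2}^2$, rather than with $\norm{\grad\vartheta}_{L^2}^2$, then at the first time $\tau_0$ where the high-frequency condition fails you obtain exactly the $W^{1,p}$ bound that \eqref{e:estimate between plap and transport} requires, the set $K=\left\{\theta\in S:\norm{\grad\theta}_{L^p}^p\leq N\right\}$ is still compact in $L^2$, and by \eqref{e:Lp and L2} it still feeds into Lemmas \ref{lem:H1 mean large} and \ref{lem:RAGE thm}; no regularization is needed. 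Finally, your explicit treatment of the homogeneity mismatch --- running the decay step only while $\norm{\vartheta}_{L^2}\geq\tfrac{1}{2}$ (or, equivalently, using Jensen to convert time-averaged $\dot{H}^1$ lower bounds into lower bounds on $\norm{\grad\vartheta}_{L^p}^p$) --- is a genuine point that the paper leaves implicit, and your fix is correct.
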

\begin{proof}
Similar to the \emph{porous medium case}, it is sufficient for us to observe the fact that 
\begin{equation}
\label{e:Lp and L2}
    \norm{\grad\psi}_{L^p}\geq C\norm{\grad\psi}_{L^2}
\end{equation}
for any $p\geq 2$, according to the H\"{o}lder inequality. Then we just need to re-estimate the difference between \eqref{e:adv p-laplacian} and the underlying inviscid problem. This estimate was established in Lemma 4.1 of \cite{feng2021dissipation}. For the convenience of readers, we cite the estimate here. For a cellular flow with cell size $m$, let $\vartheta_m^{\nu}$ denote the solution to \eqref{e:adv p-laplacian} and $\vartheta_m^{0}$ denote the inviscid problem. Then the estimate writes
\begin{equation}
\label{e:estimate between plap and transport}
    \norm{\vartheta_m^{\nu}(t)-\vartheta_m^{0}(t)}_{L^2}^2
    \leq \frac{C_p}{\norm{\grad u_m}_{L^{\infty}}}e^{2\norm{\grad u_m}_{L^{\infty}}t}\norm{\grad\varphi_0}_{L^p}^p,
\end{equation}
where $C_p$ is a constant depending on the parameter $p$. Combine the fact \eqref{e:Lp and L2} and estimate \eqref{e:estimate between plap and transport}, with the guide of \cite{feng2021dissipation} one can easily complete the proof of Theorem \ref{thm:RH for plap} by modifying the proof of the linear diffusion case.
\end{proof}

\section{Relaxation enhancing flows for hyper-diffusion}
In this chapter, we will construct a relaxation enhancing flow for hyper-diffusion but not for standard diffusion. 

To begin with, we recall that a number $\alpha\in\mathbb{R}$ is called $\beta$-Diophatine if there exists a constant $C$ such that for each $k\in\mathbb{Z}\setminus\{0\}$ we have 
$$\inf_{p\in \mathbb{Z}}|\alpha\cdot k+p|\geq\frac{C}{|k|^{1+\beta}}.$$
And the number $\alpha$ is Liouvillean if it is not Diophantine for any $\beta>0$. More specifically, for any $n(>1)$ and constant $C>0$, one can find $q_n\in\mathbb{Z}\setminus\{0\}$ and $p_n\in\mathbb{Z}$, such that 
\begin{equation}\label{eq:ln}
|\alpha\cdot q_n+p_n|\leq\frac{C}{|q_n|^{n}}.
\end{equation}
One can understand the Liouvillean numbers are the ones which can be very well approximated by rationals.
\begin{remark}
There exist lots of such irrational Liouvillean numbers. For example, a well-known one is given by $\alpha=\sum\limits_{n\geq 0}\frac{1}{10^{n!}}$.
\end{remark}
On the other hand, we denote by $\Phi^u_t$ the flow on the torus generated by $u$, and by $U^t$ the evolution operator on $L^2(\mathbb{T}^2)$
generated by $\Phi_t^u$:$(U^tf)(x)=f(\Phi_{-t}^u(x))$. By now, we are ready to state our result.
\begin{proposition}
\label{prop:RH flows}
There exists a smooth incompressible flow $u(x,y)$ on the two-dimensional torus so that the corresponding unitary evolution $U^t$ has a discrete spectrum on $H^1(\mathbb{T}^2)$ but none of the eigenfunctions of $U^t$
are in $H^2(\mathbb{T}^2)$.
\end{proposition}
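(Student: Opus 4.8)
The plan is to construct $u$ as a shear-type or Anzai-skew-product flow whose evolution operator $U^t$ has explicitly computable eigenfunctions, so that the spectral behavior on $H^1$ versus $H^2$ can be read off directly from Fourier decay rates. The natural candidate is a flow on $\mathbb{T}^2$ of the form $\Phi_t^u(x,y)=(x+\alpha t, y+t \cdot a(x))$ or, more simply, a constant-in-one-variable shear $u=(c_1,\,c_2)$ with an irrational, specifically \emph{Liouvillean}, slope $\alpha=c_2/c_1$. For such a flow the evolution operator acts on Fourier modes $e_{j,k}(x,y)=e^{2\pi i(jx+ky)}$ by multiplication by a unimodular phase $e^{2\pi i t(jc_1+kc_2)}$, so every $e_{j,k}$ is a (generalized) eigenfunction and the spectral question reduces entirely to when these combine into genuine $L^2$ eigenfunctions lying in $H^1$ but not $H^2$. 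The Liouvillean condition \eqref{eq:ln} is exactly the device that lets the associated small-divisor sums converge in one Sobolev norm and diverge in the next.

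The key steps, in order, are as follows. First I would fix a Liouvillean number $\alpha$ (using the remark's example $\alpha=\sum_{n\geq0}10^{-n!}$) and its rational approximants $p_n/q_n$ from \eqref{eq:ln}, and write down a flow whose evolution operator diagonalizes against the Fourier basis. Second, I would build a candidate eigenfunction $\phi=\sum c_{j,k}\,e_{j,k}$ as a lacunary series supported on the resonant frequencies $(j,k)=(q_n,-p_n)$ picked out by the approximants; the coefficients $c_n$ are chosen so that $\sum |c_n|^2|q_n|^{2}<\infty$ while $\sum |c_n|^2|q_n|^{4}=\infty$, which places $\phi\in H^1\setminus H^2$ by the very definition of the $\dot H^s$ norm given in the preliminaries. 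Third, I would verify $U^t\phi$ stays in the same eigenspace — i.e. that the phases $e^{2\pi i t(q_nc_1-p_nc_2)}$ are controlled by \eqref{eq:ln} so that the frequencies genuinely resonate and $\phi$ is a true eigenfunction rather than a spreading wave packet. Fourth, I would establish the \emph{negative} direction: that $U^t$ has purely discrete spectrum on $H^1$ (so one cannot simply quote weak-mixing/RAGE-type continuous-spectrum obstructions), which amounts to showing the eigenfunctions constructed this way are complete in $H^1$, or that any $H^1$ eigenfunction necessarily has fast-enough coefficient decay to land in $H^1$ but the obstruction to $H^2$ is generic.

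The main obstacle I expect is reconciling the two halves of the statement simultaneously: producing eigenfunctions that are in $H^1$ but \emph{none} in $H^2$, while still having a genuine discrete spectrum \emph{on all of} $H^1$. A single hand-built lacunary eigenfunction gives the separation $H^1\setminus H^2$ easily, but the assertion that the whole spectrum is discrete on $H^1$ yet \emph{every} eigenfunction fails $H^2$-regularity is a uniform statement that requires the Liouvillean small divisors to degrade $H^2$-convergence across the entire eigenbasis, not just one vector. I would handle this by arranging the flow so that each eigenvalue's eigenspace is forced to be supported on resonant frequencies of size $|q_n|$, and then using \eqref{eq:ln} to show the $|k|^{4}$-weighted sums diverge for \emph{every} admissible coefficient sequence realizing an eigenfunction — essentially transferring the Diophantine/Liouvillean dichotomy into the gap between the $\dot H^1$ and $\dot H^2$ weights $|n|^2$ and $|n|^4$. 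The subtlety is ensuring the construction is genuinely a smooth incompressible flow $u$ (not merely an abstract unitary), which I would secure by taking $u$ to be a smooth, divergence-free perturbation (or a skew-product over an irrational rotation) whose time-one map realizes the desired phases.
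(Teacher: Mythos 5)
There is a genuine gap, and it sits at the heart of your construction. For a constant shear $u=(c_1,c_2)$ with irrational slope $\alpha=c_2/c_1$, the map $(j,k)\mapsto jc_1+kc_2$ is injective on $\mathbb{Z}^2$, so each eigenspace of $U^t$ is one-dimensional and spanned by a single Fourier mode $e_{j,k}$, which is $C^\infty$ and in particular lies in $H^2$. Your proposed lacunary sum $\phi=\sum_n c_n e_{q_n,-p_n}$ is therefore \emph{not} an eigenfunction: the phases $e^{2\pi i t(q_nc_1-p_nc_2)}$ are merely close to one another for the Liouvillean approximants, not equal, so $\phi$ is exactly the ``spreading wave packet'' you worry about in your third step. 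The same difficulty persists for the skew product generated by $(\alpha,a(x))$: the $y$-independent modes $e^{2\pi i jx}$ remain smooth eigenfunctions, again violating the requirement that \emph{no} eigenfunction lie in $H^2$. The Liouvillean small divisors cannot be injected directly into the Fourier weights $|n|^2$ versus $|n|^4$; they only enter through the solution of a cohomological equation, which forces you to leave the class of constant (and even plain skew-product) flows.

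The paper's route supplies precisely this missing mechanism. Lemma \ref{lem:H1noH2} constructs a smooth mean-zero $Q$ on $\mathbb{S}^1$ such that the homology equation \eqref{hom}, $R(\xi+\alpha)-R(\xi)=Q(\xi)$, has a solution $R\in H^1(\mathbb{S}^1)\setminus H^2(\mathbb{S}^1)$ with $e^{i\lambda R}\in H^1\setminus H^2$ for all $\lambda\neq 0$; this is done by first building (via bump functions supported at scales $q_k^{-12}$ pushed along the orbit of the rotation, with \eqref{eq:ln} guaranteeing convergence of the smooth data while the solution accumulates without bound) a function $\tilde R\in L^2\setminus H^1$, and then integrating once to shift the dichotomy up by one derivative. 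The flow is then taken to be the reparametrized linear flow $\tilde u=(\alpha/F,1/F)$ of Proposition \ref{prop:defineF}, whose nonconstant eigenfunctions are all expressed through $e^{i\lambda R}$ and hence all fail to be in $H^2$, while discreteness of the spectrum on $H^1$ comes from the solvability of \eqref{hom} in $H^1$; a final measure-preserving conjugation (Proposition \ref{prop:conjugate}) restores incompressibility. If you want to salvage your outline, you must replace the constant shear by such a reparametrized flow and identify your ``resonant lacunary series'' as the Fourier expansion of the cocycle solution $R$, whose coefficients $a_n/n$ are exactly where the $H^1$-versus-$H^2$ gap is realized.
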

To prove Proposition \ref{prop:RH flows}, we first prove following auxiliary lemma. Let $\mathbb{S}^1=[-\frac{1}{2},\frac{1}{2}]$ be the one-dimensional circle.
\begin{lemma}
\label{lem:H1noH2}
Consider the irrational Liouvillean number $\alpha$. There exists a $C^{\infty}(\mathbb{S}^1)$ mean-zero function $Q(\xi)$ so that the homology equation
\begin{equation}
\label{hom}
R(\xi+\alpha)-R(\xi)=Q(\xi)
\end{equation}
has a measurable solution $R(\xi):\mathbb{S}^1\rightarrow \mathbb{R}$ such that $R(x)$ is in $H^1(\mathbb{S}^1)$ but not in $H^{2}(\mathbb{S}^1)$. In addition, the function $R_{\lambda}(\xi)=e^{i\lambda R(\xi)}$ is in $H^1(\mathbb{S}^1)$ but not in $H^2(\mathbb{S}^1)$, for any $\lambda\in\mathbb{R}\setminus\{0\}$.
\end{lemma}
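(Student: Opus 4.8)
The plan is to solve the homological equation \eqref{hom} on the Fourier side, where it diagonalizes. Writing $R(\xi)=\sum_{k\neq 0}\hat R(k)e^{2\pi i k\xi}$ and $Q(\xi)=\sum_{k\neq 0}\hat Q(k)e^{2\pi i k\xi}$ (both mean zero), equation \eqref{hom} is equivalent to
\begin{equation*}
\hat R(k)\bigl(e^{2\pi i k\alpha}-1\bigr)=\hat Q(k),\qquad k\in\mathbb{Z}\setminus\{0\}.
\end{equation*}
The divisor satisfies $\abs{e^{2\pi i k\alpha}-1}=2\abs{\sin(\pi k\alpha)}$, which is comparable to the distance $\|k\alpha\|$ from $k\alpha$ to the integers. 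The Liouville hypothesis \eqref{eq:ln} is precisely the statement that this divisor is super-polynomially small along a suitable sequence of frequencies. My strategy is therefore to \emph{prescribe} $R$ (supported on these resonant frequencies) with just the right decay to sit in $H^1\setminus H^2$, and then \emph{define} $Q$ by the formula above; the smallness of the divisors will force $Q$ to be smooth.

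Concretely, I would use \eqref{eq:ln} (with $C=1$ and the exponent equal to $n$) to select a strictly increasing sequence $q_n\to\infty$ with $\|q_n\alpha\|\le q_n^{-n}$ — note $q_n\to\infty$ is automatic, since otherwise $\min_{q\le M}\|q\alpha\|>0$ for the bounding $M$ while $q_n^{-n}\to0$. I then set $c_n\defeq 1/(n\,q_n)$ and define the real, even, mean-zero function $R(\xi)\defeq\sum_n c_n\cos(2\pi q_n\xi)$. Since $\sum_n c_n^2 q_n^2=\sum_n n^{-2}<\infty$ we get $R\in H^1$, while $\sum_n c_n^2 q_n^4=\sum_n q_n^2/n^2=\infty$ (because $q_n\to\infty$) shows $R\notin H^2$. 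Defining $Q\defeq R(\cdot+\alpha)-R$, its only nonzero Fourier modes are at $\pm q_n$, with $\abs{\hat Q(\pm q_n)}=\abs{c_n}\,\abs{\sin(\pi q_n\alpha)}\le \pi\,\abs{c_n}\,\|q_n\alpha\|\le \pi\,q_n^{-n-1}$. For every fixed $s$ this gives $\abs{\hat Q(q_n)}\,q_n^{s}\le \pi\,q_n^{s-n-1}\to0$, so $Q$ has faster-than-polynomial Fourier decay and hence $Q\in C^\infty(\mathbb{S}^1)$; it is mean zero by translation invariance of the integral. This step, where the Liouville property is indispensable, handles the first assertion.

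For the second assertion, first note that $R\in H^1(\mathbb{S}^1)\subset C^0(\mathbb{S}^1)$ (one-dimensional Sobolev embedding), so $R_\lambda=e^{i\lambda R}$ is continuous and the chain rule gives the a.e.\ identity $R_\lambda'=i\lambda R'e^{i\lambda R}$. Since $\abs{e^{i\lambda R}}=1$ we have $\norm{R_\lambda'}_{L^2}=\abs{\lambda}\norm{R'}_{L^2}<\infty$, and $R_\lambda\in L^\infty\subset L^2$, so $R_\lambda\in H^1$ for every $\lambda$. To show $R_\lambda\notin H^2$ I would argue by contradiction and exploit that $H^1(\mathbb{S}^1)$ is a Banach algebra (it embeds in $L^\infty$, so a product of $H^1$ functions is again $H^1$). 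Multiplying the chain-rule identity by $\overline{R_\lambda}=e^{-i\lambda R}$ yields $R'=(i\lambda)^{-1}R_\lambda'\,\overline{R_\lambda}$. If $R_\lambda\in H^2$, then $R_\lambda'\in H^1$ and $\overline{R_\lambda}\in H^2\subset H^1$, so their product lies in $H^1$; hence $R'\in H^1$, i.e.\ $R\in H^2$, contradicting the construction.

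The main obstacle is this last step, $R_\lambda\notin H^2$. The direct route of differentiating twice gives $R_\lambda''=i\lambda e^{i\lambda R}\bigl(R''+i\lambda (R')^2\bigr)$, and one must rule out that the $L^2$-bounded nonlinear term $(R')^2$ cancels the singular part $R''$ — which would require separately controlling $(R')^2$ (provable via lacunarity of $\{q_n\}$ and Zygmund's theorem, but at extra cost). The bootstrap through the $H^1$-algebra structure sidesteps this difficulty entirely and uses only $R\in H^1\setminus H^2$. A secondary point to check carefully is the legitimacy of the distributional manipulations, which is why the continuity of $R$ (and hence of $e^{i\lambda R}$) coming from the $1$D embedding is used at the outset.
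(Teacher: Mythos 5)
Your proposal is correct, and it reaches the lemma by a genuinely different route from the paper. The paper works in physical space: it first builds a function $\tilde R=\sum_k\tilde R_{q_k}$ out of translated bump functions supported on shrinking intervals, shows $\tilde R\in L^2\cap L^4$ but $\tilde R\notin H^1$ (by exhibiting positive-measure sets on which $\tilde R\le -k_0$), and then antidifferentiates to obtain $R\in H^1\cap W^{1,4}\setminus H^2$ together with a smooth $Q$. You instead work entirely on the Fourier side, prescribing a lacunary cosine series $R=\sum_n (nq_n)^{-1}\cos(2\pi q_n\xi)$ supported on the resonant frequencies and defining $Q$ by the cocycle; the membership $R\in H^1\setminus H^2$ is read off from $\sum_n c_n^2q_n^2<\infty$ and $\sum_n c_n^2q_n^4=\infty$ (the latter holds because the $q_n$ are strictly increasing positive integers, hence $q_n\ge n$ --- your parenthetical ``because $q_n\to\infty$'' is the correct fact for integer sequences, though it would not suffice for general real ones), and the smoothness of $Q$ follows from the super-polynomially small divisors. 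Both constructions are valid; yours is shorter and makes the role of the Liouville condition more transparent. The more substantive divergence is in the second assertion: the paper deliberately arranges $R\in W^{1,4}$ so that $(R')^2\in L^2$ and therefore cannot cancel the non-$L^2$ part $R''$ in $R_\lambda''=i\lambda e^{i\lambda R}\left(R''+i\lambda (R')^2\right)$, whereas your bootstrap through the identity $R'=(i\lambda)^{-1}R_\lambda'\,\overline{R_\lambda}$ and the Banach-algebra property of $H^1(\mathbb{S}^1)$ needs only $R\in H^1\setminus H^2$. That is a cleaner argument which removes the need for any $L^4$ control and would in fact streamline the paper's own proof.
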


\begin{proof}
As the fist step, we construct a function $\tilde{R}$ in $L^2(\mathbb{S}^1)\cap L^4(\mathbb{S}^1)$ but not in $H^1(\mathbb{S}^1)$ satisfying \eqref{hom} with a mean zero function $\tilde{Q}(\xi)\in C^{\infty}(\mathbb{S}^1)$.
Let $\theta(t)$ be a $C^{\infty}$ bump function in $\mathbb{S}^1$, with $\theta(t)$ equals to $1$ when $-\frac{1}{16}\leq t\leq \frac{1}{16}$, and equals to $0$ when $\frac{1}{8}\leq t\leq \frac{1}{2}$ or $-\frac{1}{2}\leq t\leq -\frac{1}{8}$, and smooth everywhere with $|\frac{d\theta}{dt}|\leq 20$.



Then for $q\geq 1$ we define $\tilde{Q}_q(x)=\theta(q^{6} x)$, for $x\in [-\frac{1}{8q^{6}},\frac{1}{8q^{6}}]$, and $0$ elsewhere in $[-\frac{1}{2},\frac{1}{2}]$. Note that $\tilde{Q}_q$ is still a $C^{\infty}$ function on $\mathbb{S}^1$. We define
\begin{equation}
\label{eqn:tildeQ}
\tilde{Q}=\sum_k (\tilde{Q}_{q^2_{k}}(\cdot-q_{k}\alpha)-\tilde{Q}_{q^2_{k}}),
\end{equation}
where the sequence $\{q_{k}\}$ is chosen as follows. By the definition of Liouvillean number $\alpha$ in \eqref{eq:ln}, fix any $C>0$, one can choose integers $q_{k}$ and $p_k$ so that $|\alpha\cdot q_k-p_k|\leq \frac{C}{|q_k|^{k}}$ for any $k>1$. Also note that since $\alpha$ is irrational, $q_k$ has infinitely many choices for each $k$. Therefore, without loss of generality, we can choose $q_k\geq k$ and $q_k$ increasing in $k$. \\
Now, we show $\tilde{Q}$ belongs to $C^{\infty}$ by checking the right-hand side of \eqref{eqn:tildeQ} converges very fast. By chain rule, we have $$||\tilde{Q}_q||_{H^r}\leq C(r) q^{6r}.$$ Then by using Fourier expansion, Parseval's identity and the definition of $\alpha$, we get
\begin{align*}
||\tilde{Q}_{q^2_k}(\cdot-q_{k}\alpha)-\tilde{Q}_{q^2_k}||_{H^r}&\leq C \||n|^r\hat{\tilde{Q}}_{q^2_k}(n)\left(e^{i 2\pi n\cdot q_k\alpha}-1\right)\|_{l^2}\\
&\leq C \||n|^r\hat{\tilde{Q}}_{q^2_k}(n)\left(e^{i 2\pi n\cdot (q_k\alpha-p_k)}-1\right)\|_{l^2}\\
&\leq C(r) \||n|^r\hat{\tilde{Q}}_{q^2_k}(n)\|_{l^2}/|q_k|^k\\
&\leq C(r)\frac{||\tilde{Q}_{q^2_k}||_{H^r}}{|q_{k}|^{k}}.
\end{align*}
By taking the $H^{r}$ norm on the both sides of \eqref{eqn:tildeQ}, and the series on the right-hand side is bounded by $C\cdot\sum\limits_k q_{k}^{-10}\leq C\cdot\sum\limits_k \frac{1}{k^{10}}<\infty$ for any $k\geq 12r+10$. Therefore, $\tilde{Q}$ belongs to $H^r$ for any $r$, which further yields $\tilde{Q}$ is smooth by the Sobolev embedding arguments.\\
Before defining the function $\tilde{R}$, we define $\tilde{R}_q=-\sum\limits_{l=0}^{q-1}\tilde{Q}_{q^2}(\cdot-l\alpha)$, then immediately we have $$\tilde{Q}_{q^2}(\cdot-q\alpha)-\tilde{Q}_{q^2}(\cdot)=\tilde{R}_q(\cdot-\alpha)-\tilde{R}_q(\cdot).$$ 


Now we define $\tilde{R}=\sum_k \tilde{R}_{q_{k}}$, which satisfies $\tilde{R}(\cdot-\alpha)-\tilde{R}=\tilde{Q}$. Let $m$ be the standard Lebesgue measure. Since we have $m(supp(\tilde{R}_q))\leq q\cdot m(supp(\tilde{Q}_{q^2}))\leq \frac{1}{4} q^{-11}$, so we have $\sum_q m(supp(\tilde{R}_q))<\frac{1}{2}$. Which means for any sequence $\{a_q\}$, $\sum_q a_q \tilde{R}_q$ converges in measure, so is a measurable function in $\mathbb{S}^1$. Moreover, since we have $||\tilde{Q}_{q^2}||_{L^4}\leq C q^{-3}$, $||\tilde{R}_q||_{L^4}\leq q||\tilde{Q}_{q^2}||_{L^4}\leq C q^{-2}$, so $\tilde{R}$ is in $L^4$, hence also in $L^2$ and is finite almost everywhere. However, let $t\in[-\frac{1}{16},\frac{1}{16}]$, then for any positive integer $k_0$, $\tilde{R}_{q_k}(t\cdot\frac{1}{q^{12}_{k_0}})\leq -\tilde{Q}_{q^2_k}(t\cdot\frac{1}{q^{12}_{k_0}})=-\theta(\frac{q_k^{12}}{q_{k_0}^{12}}t)=-1$, for any $k\leq k_0$. This means $\tilde{R}(t\cdot\frac{1}{q^{12}_{k_0}})\leq -k_0$, for any large positive integer $k_0$ and any $t\in [-\frac{1}{16},\frac{1}{16}]$. This means for any $k_0>1$, there is a positive-measure set $\mathfrak{M}_{k_0}\subset \mathbb{S}^1$ such that $\tilde{R}\leq -k_0$ on it. Hence, there cannot be a continuous function $\bar{R}$ on $\mathbb{S}^1$ such that $\tilde{R}(x)=\bar{R}(x)$ almost everywhere, otherwise $\bar{R}(x)$ must be unbounded in $\mathbb{S}^1$, hence by Sobolev embedding $\tilde{R}$ is not in $H^1$. As a consequence, we construct such function $\tilde{R}$ in $L^2\cap L^4$ but not in $H^1$.

Now, without loss of generality, by subtracting constants and swap the sign, we can assume $\tilde{R}$ we get is mean zero, and together with $\tilde{Q}$ satisfying (\ref{hom}), with the same regularity. Let $R(x)=\int_0^x \tilde{R}(y)dy$, $Q(x)=\int_0^x\tilde{Q}(y)dy+\int_0^{\alpha}\tilde{R}(y)dy$, as $\tilde{R}(y+\alpha)-\tilde{R}(y)=\tilde{Q}(y)$, taking integration from $0$ to $x$, we have $R(x+\alpha)-R(x)=Q(x)$. It is clear that $Q$ is in $C^{\infty}(\mathbb{S}^1)$. By Fubini theorem, one can easily see that if the $n$-th Fourier coefficient of $\tilde{R}$ is $a_n$, then the $n$-th Fourier coefficient of $R$ is $\frac{a_n}{n}$ for $n\neq 0$. Hence, $R(x)$ is not in $H^2(\mathbb{S}^1)$. Meanwhile, by Lebesgue differentiation theorem, $R(x)$ is in $H^1(\mathbb{S}^1)\cap W^{1,4}(\mathbb{S}^1)$. By chain rule and the fact that $R$ is in $W^{1,4}(\mathbb{S})\cap H^1({\mathbb{S}})$, we also have $R_{\lambda}(x)=e^{i\lambda R(x)}$ is in $H^1$ but not in $H^2$, for $\lambda\neq 0$.
\end{proof}
The remaining proof of Proposition \ref{prop:RH flows} is identically the same as the proof of Proposition 6.2 in \cite{ConstantinKiselevEA08} (but replace Proposition 6.3 in \cite{ConstantinKiselevEA08} by Lemma \ref{lem:H1noH2} above). For completeness, we summarize the idea of the proof in the following two propostions.
\begin{proposition}\label{prop:defineF}
There exists a function $F(x,y)$ on $\mathbb{T}^2$, such that the unitary evolution to the flow $\tilde{u}(x,y)=(\frac{\alpha}{F(x,y)},\frac{1}{F(x,y)})$ has a discrete spectrum on $H^1(\mathbb{T}^2)$ but none of the eigenfunctions of such evolution are in $H^2(\mathbb{T}^2)$.
\end{proposition}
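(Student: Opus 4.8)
The plan is to recognize the flow $\tilde{u}$ as a \emph{special flow} (suspension) over the irrational rotation $R_\alpha\colon\xi\mapsto\xi+\alpha$ on $\mathbb{S}^1$. Since the trajectories of $\tilde u=(\alpha/F,1/F)$ satisfy $dx/dy=\alpha$, the quantity $\xi\defeq x-\alpha y$ is a first integral, and the first-return map to the cross-section $\{y=0\}$ is exactly $\xi\mapsto\xi+\alpha$ independently of $F$; the function $F$ only sets the return time (roof function) $\varphi(\xi)=\int_0^1 F(\xi+\alpha y,y)\,dy$. First I would choose $F$ to be a smooth, strictly positive function on $\mathbb{T}^2$ for which $\varphi(\xi)=c+Q(\xi)$, where $Q\in C^\infty(\mathbb{S}^1)$ is the mean-zero function produced by Lemma \ref{lem:H1noH2} and $c$ is a constant large enough that $F>0$ (so that $\tilde u$ is a genuine smooth field and $y$ increases monotonically along trajectories). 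Passing to the coordinates $(\xi,s)$, where $s\in[0,\varphi(\xi))$ is the elapsed time since the last crossing of $\{y=0\}$, turns $U^t$ into the suspension flow $(\xi,s)\mapsto(\xi,s+t)$ with the identification $(\xi,\varphi(\xi))\sim(\xi+\alpha,0)$, and this change of variables is a smooth diffeomorphism of $\mathbb{T}^2$ because $F$ is smooth and positive.

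Next I would solve the eigenvalue problem. An $L^2$ eigenfunction of the suspension flow with eigenvalue $\lambda$ has the form $g(\xi,s)=e^{i\lambda s}f(\xi)$, and the gluing condition $(\xi,\varphi(\xi))\sim(\xi+\alpha,0)$ forces the \emph{twisted cohomological equation}
\begin{equation}
\label{e:twisted}
f(\xi+\alpha)=e^{i\lambda\varphi(\xi)}f(\xi).
\end{equation}
Because $\varphi=c+Q$ and $Q(\xi)=R(\xi+\alpha)-R(\xi)$ is the homology equation \eqref{hom} of Lemma \ref{lem:H1noH2}, the substitution $f(\xi)=e^{i\lambda R(\xi)}h(\xi)$ reduces \eqref{e:twisted} to $h(\xi+\alpha)=e^{i\lambda c}h(\xi)$, the eigenvalue equation for the bare rotation. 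Taking $h(\xi)=e^{2\pi i k\xi}$ then yields the eigenvalues $\lambda_{k,m}=2\pi(k\alpha+m)/c$ and the eigenfunctions
$$
g_{k,m}(\xi,s)=e^{i\lambda_{k,m}s}\,e^{i\lambda_{k,m}R(\xi)}\,e^{2\pi i k\xi},\qquad(k,m)\in\mathbb{Z}^2,
$$
so that each eigenfunction carries the factor $R_{\lambda}=e^{i\lambda R}$ studied in Lemma \ref{lem:H1noH2}. The completeness of $\{g_{k,m}\}$ in $L^2(\mathbb{T}^2)$ --- equivalently, the fact that a suspension of an ergodic rotation with roof cohomologous to a constant is measurably conjugate to the corresponding constant-roof (linear) suspension --- shows that $U^t$ has pure point, i.e.\ discrete, spectrum.

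It then remains to read off the regularity. The conjugacy straightening the roof, which adjusts the fiber coordinate $s$ by $R(\xi)$, is \emph{continuous} (indeed $R\in H^1(\mathbb{S}^1)\hookrightarrow C^0$) but not smooth, and this is precisely what degrades the eigenfunctions. By Lemma \ref{lem:H1noH2}, for $\lambda\neq 0$ the function $R_\lambda=e^{i\lambda R}$ lies in $H^1(\mathbb{S}^1)$ but not in $H^2(\mathbb{S}^1)$; multiplying by the smooth, nonvanishing factors $e^{i\lambda s}$ and $e^{2\pi i k\xi}$ and pulling back by the smooth diffeomorphism $(x,y)\mapsto(\xi,s)$ preserves both the $H^1$ membership and the failure of $H^2$ membership (were some $g_{k,m}$ in $H^2$, dividing out the smooth factors and changing variables back would place $R_{\lambda_{k,m}}$ in $H^2$, a contradiction). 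Hence every eigenfunction lies in $H^1(\mathbb{T}^2)$, while every nonconstant eigenfunction fails to be in $H^2(\mathbb{T}^2)$, which is the assertion of the proposition.

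The main obstacle, and the step I would import essentially verbatim from the proof of Proposition 6.2 in \cite{ConstantinKiselevEA08}, is the explicit construction of a smooth, strictly positive $F$ on $\mathbb{T}^2$ realizing the prescribed roof $\varphi=c+Q$: one cannot simply set $F(x,y)=\varphi(x-\alpha y)$, since this is not $1$-periodic in $y$, so $F$ must be assembled (for instance by an averaging/convolution argument in the $y$-direction) so that $\int_0^1 F(\xi+\alpha y,y)\,dy$ reproduces $\varphi$ exactly while $F$ stays smooth and positive. The companion technical point is the completeness of the eigenfunction system, which upgrades ``eigenfunctions exist'' to ``pure point spectrum.'' Both are purely the rotation-suspension bookkeeping of \cite{ConstantinKiselevEA08}; the only new input is that the homology equation \eqref{hom} is now solved with $R\in H^1\setminus H^2$ rather than $R\in L^2\setminus H^1$, so that the spectral regularity threshold is raised by exactly one derivative --- from ``eigenfunctions not in $H^1$'' to ``eigenfunctions in $H^1$ but not $H^2$'' --- which is exactly what separates relaxation enhancement for hyper-diffusion from that for standard diffusion.
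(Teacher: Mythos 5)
Your argument is correct and follows exactly the route the paper takes: the paper defers to the proof of Proposition 6.2 in \cite{ConstantinKiselevEA08} with Lemma \ref{lem:H1noH2} substituted as the regularity input, and that proof is precisely the suspension-over-rotation computation you carry out (roof function $\varphi=c+Q$, twisted cohomological equation solved by $f=e^{i\lambda R}h$, eigenfunctions inheriting the $H^1\setminus H^2$ regularity of $e^{i\lambda R}$). The explicit construction of $F$ that you flag as the remaining technical step is exactly the one recorded in the paper's remark, $F(x,y)=b+\psi(y)(Q(x-\alpha y)-b)$ with $\psi$ a smooth cutoff supported near $y=0$.
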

\begin{remark}
The construction of $F$ is explicit. For example, as in \cite{ConstantinKiselevEA08}, by adding a constant and rescaling, one can replace the right hand side of \eqref{hom} by $Q-1$ instead of $Q$, such that $Q$ is positive and has mean $1$, and 
$$
F(x,y)=b+\psi(y)(Q(x-\alpha y)-b),
$$
where $b=\frac{1}{2}\min Q$, $\psi(y)$ is a smooth cutoff function supported near $0$. And we also emphasize that $\tilde{u}$ may not be incompressible.
\end{remark}
\begin{proposition}\label{prop:conjugate}
For $F(x,y)$ and the corresponding $\tilde{u}(x,y)$ in Proposition \ref{prop:defineF}, there exists an incompressible flow $u(x,y)$ and a measure-preserving map $S$ from $\mathbb{T}^2$ to $\mathbb{T}^2$, such that the evolution of $u$ is conjugate by $S$ to the evolution of $\tilde{u}$, and the evolution of $u$ possesses the same spectrum as that of $\tilde{u}$.
\end{proposition}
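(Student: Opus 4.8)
The plan is to exploit the fact that, although $\tilde u$ is not divergence free, it is merely a time-reparametrization of the constant linear flow in the direction $(\alpha,1)$, and hence preserves the smooth weighted measure $d\mu=F\,dx\,dy$. Indeed, since $F\tilde u=(\alpha,1)$ is divergence free, one has $\nabla\cdot(F\tilde u)=0$, which is exactly the assertion that the flow $\tilde\Phi_t$ generated by $\tilde u$ leaves $\mu$ invariant. After normalizing $F$ so that $\int_{\mathbb{T}^2}F\,dx\,dy=1$ (harmless, as it only rescales time), $\mu$ becomes a smooth, everywhere-positive probability measure invariant under $\tilde\Phi_t$. The entire construction then rests on transporting $\mu$ back to Lebesgue measure by a smooth change of variables.

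Concretely, I would invoke Moser's theorem (in the Dacorogna–Moser form): since $F$ is smooth and strictly positive with $\int F=\int 1=1$, there is a smooth diffeomorphism $S:\mathbb{T}^2\to\mathbb{T}^2$ with $|\det DS|=1/(F\circ S)$, equivalently $S_*(dx\,dy)=F\,dx\,dy$. (For this particular $F(x,y)=b+\psi(y)(Q(x-\alpha y)-b)$ one can instead build $S$ explicitly by passing to the shear coordinate $\xi=x-\alpha y$, along which $\tilde u$ is constant, and reparametrizing the transverse variable by the cumulative mass of $F$; I expect the abstract Moser route to be cleaner and to sidestep verifying periodicity of the explicit reparametrization.) I then set $\Phi_t^u:=S^{-1}\circ\tilde\Phi_t\circ S$ and let $u$ be its generating vector field, $u(x)=(DS(x))^{-1}\tilde u(S(x))$, which is autonomous because $\tilde u$ is and smooth because $S$ and $\tilde u$ are.

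Next I would verify the two conclusions. For incompressibility, a pushforward computation gives $(\Phi_t^u)_*\mathrm{Leb}=(S^{-1})_*(\tilde\Phi_t)_*S_*\mathrm{Leb}=(S^{-1})_*(\tilde\Phi_t)_*\mu=(S^{-1})_*\mu=\mathrm{Leb}$, so $\Phi_t^u$ preserves Lebesgue measure and therefore $\nabla\cdot u=0$. For the spectral statement, introduce the composition operator $C_Sf:=f\circ S$, which by the measure-preserving property of $S$ is a \emph{unitary} map $L^2(\mathbb{T}^2,\mu)\to L^2(\mathbb{T}^2,\mathrm{Leb})$. A direct check gives $V^t=C_S\,U^t\,C_S^{-1}$, where $U^tf=f\circ\tilde\Phi_{-t}$ is the evolution of $\tilde u$ (unitary on $L^2(\mu)$) and $V^tg=g\circ\Phi_{-t}^u$ is the evolution of $u$ (unitary on $L^2(\mathrm{Leb})$). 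Unitary equivalence of $U^t$ and $V^t$ yields identical spectra.

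Finally, to carry over the refined conclusion of Proposition \ref{prop:defineF} — discrete spectrum on $H^1$ with no eigenfunction in $H^2$ — I would use that $S$ is a \emph{smooth} diffeomorphism, so $f\mapsto f\circ S$ is a bounded isomorphism of $H^1$ and of $H^2$, with bounded inverse. Hence an eigenfunction $\phi$ of $U^t$ lies in $H^1\setminus H^2$ exactly when $C_S\phi$, the corresponding eigenfunction of $V^t$, does, and discreteness on $H^1$ transfers verbatim. The step I expect to be the main obstacle is precisely the construction of $S$: securing it as a genuine smooth diffeomorphism, rather than a merely measurable measure isomorphism, so that the $H^1$/$H^2$ dichotomy is preserved exactly and the conjugated field $u$ is smooth and time-independent. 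Once $S$ is known to be smooth, the incompressibility, the unitary intertwining, and the regularity transfer are all routine bookkeeping.
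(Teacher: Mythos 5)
Your proposal is correct and is essentially the argument the paper intends: the paper gives no proof of its own here, deferring entirely to Proposition 6.2 of \cite{ConstantinKiselevEA08}, whose proof is exactly this classical time-change construction — observe that $\nabla\cdot(F\tilde u)=0$ so the flow of $\tilde u$ preserves $F\,dx\,dy$, produce a smooth volume-rectifying diffeomorphism $S$ by Moser's theorem, and conjugate. You also correctly isolate and resolve the one point that actually matters for Proposition \ref{prop:RH flows}, namely that $S$ must be a genuine $C^\infty$ diffeomorphism (not merely a measurable measure isomorphism) so that composition with $S$ is an isomorphism of $H^1$ and of $H^2$ and the eigenfunction regularity dichotomy transfers.
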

From Proposition \ref{prop:conjugate} one can directly get Proposition \ref{prop:RH flows}.

\subsection*{Acknowledgments}
The work of Y.F. is supported by the National Key R\&D Program of China, Project Number 2021YFA1001200. The work of X.X. is supported by the National Key R\&D Program of China, Project Number 2021YFA1001200, and the NSFC Youth program, grant number 12101278. And the authors would like to thank Andrej Zlato\v{s}, Gautam Iyer, and Jean-Luc Thiffeault for helpful suggestions.

\appendix
\section{Proof of Theorem \ref{thm:main}}
Similar to \eqref{e:eform1.1}, we rescale the time variable of our equation by $A$ and call $\epsilon=\frac{1}{A}$. Then it is sufficient for us to show that given $\tau>0$, there exists $m_0$ and corresponding $\epsilon_0=\epsilon_0(m_0)>0$ such that if $m>m_0$ and $\epsilon<\epsilon_0$, we have $\norm{\theta_m^{\epsilon}(\tau/\epsilon)}_{L^2}<\frac{1}{2}$ for any initial datum $\theta_0$ with $\norm{\theta_0}_{L^2}=1$. Notice that $(-\lap)^{\alpha}$ is an unbounded positive operator with a discrete spectrum. Its eigenvalues $\Lambda_n=\lambda_n^{\alpha}\rightarrow\infty$ as $n\rightarrow\infty$, here $\lambda_n$ stands for the eigenvalue of $-\lap$. Thus we can choose $N$ large enough, so that $e^{-\Lambda_N \tau/80}<\frac{1}{2}$. Define the sets $K=\left\{\theta\in S\big\vert\norm{\theta}_{\dot{H}^{\alpha}}^2\leqslant\Lambda_N\right\}\subset S$ and $K_1=\left\{\theta\in K\big\vert\norm{P_p \theta}_{L^2}^2\geqslant 1/2\right\}$, then $K$ is compact. According to Lemma \ref{lem:large_H1}, there exists $m_0$ large enough so that for any fixed $m>m_0$, we can further choose $J\geqslant N$ and $J\geqslant N_p(5\Lambda_N,K,m)$. Define
\begin{equation}
\label{e:tau_1}
    \tau_1=\max\left\{T_p(5\Lambda_N,K,m),T_c(J,\frac{\Lambda_N}{20\Lambda_J},m,K)\right\},
\end{equation}
where $T_p$ is from Lemma \ref{lem:H1 mean large}, and $T_c$ from Lemma \ref{lem:RAGE thm}. Notice that $\tau_1$ only depends on the value of $m$ and $\tau$. Finally, choose $\epsilon_0>0$ small enough (may depend on $m$, $\tau$) so that $\tau_1<\tau/2\epsilon_0$, and
\begin{equation}
    \epsilon_0\int_0^{\tau_1}F_m^2(t)dt\leqslant\frac{1}{20\Lambda_J},
\end{equation}
where $F_m(t)$ is the function from condition \eqref{e:F_m condition}.
Take any $\epsilon<\epsilon_0$. If $\norm{\theta_m^{\epsilon}(s)}_{\dot{H}^{\alpha}}^2\geqslant\Lambda_{N}\norm{\theta_m^{\epsilon}(s)}_{L^2}^2$ holds for all $s\in[0,\tau/\epsilon]$, then according to Lemma \ref{lem:trivial case} and the choice of $\Lambda_N$ we directly get $\norm{\theta_m^{\epsilon}(\tau/\epsilon)}_{L^2}\leqslant e^{-2\Lambda_{N}\tau}\leqslant\frac{1}{2}$, and we are done. Otherwise, let $\tau_0$ be the first time in $[0,\tau/\epsilon]$ such that $\norm{\theta_m^{\epsilon}(\tau_0)}_{\dot{H}^{\alpha}}^2\leqslant\Lambda_{N}\norm{\theta_m^{\epsilon}(\tau_0)}_{L^2}^2$. Then we claim that the following decay holds on the time interval $[\tau_0,\tau_0+\tau_1]$:
\begin{equation}
\label{e:the claim}
    \norm{\theta_m^{\epsilon}(\tau_0+\tau_1)}_{L^2}^2\leqslant e^{-\Lambda_N \epsilon\tau_1/20}\norm{\theta_m^{\epsilon}(\tau_0)}_{L^2}^2.
\end{equation}
For simplicity, denote $\theta_m^{\epsilon}(\tau_0)=\theta_0$. Consider the two equations in \eqref{e:two equations}, then by Lemma \ref{lem:difference estimate} and the choice of $\epsilon_0$, we have
\begin{equation}
\label{e:difference estimate}
    \norm{\theta_m^{\epsilon}(t)-\theta_m^0(t)}_{L^2}^2\leqslant\frac{\Lambda_{N}}{40\Lambda_{J}}\norm{\theta_0}_{L^2}^2
\end{equation}
for all $t\in[\tau_0,\tau_0+\tau_1]$. Split $\theta_m^0(t)=\theta_{m,c}(t)+\theta_{m,p}(t)$, observe that $\theta_{m,c}$ and $\theta_{m,p}$ also solve the inviscid problem $\frac{d}{dt}\theta(t)=i\FL_m\theta$, but with initial data $P_c\theta_0$ and $P_p\theta_0$ at $t=\tau_0$, respectively. Now, we consider two cases:
\\
\textbf{Case I.} Assume that $\norm{P_c\theta_0}_{L^2}^2\geqslant\frac{3}{4}\norm{\theta_0}^2$, equivalently, $\norm{P_p\theta_0}_{L^2}^2\leqslant\frac{1}{4}\norm{\theta_0}^2$. Note that since $\theta_0/\norm{\theta}_{L^2}\in K$, thus we can apply Lemma \ref{lem:RAGE thm}, by our choice of $\tau_1$ in \eqref{e:tau_1}, we get
\begin{equation}
\label{e:estimate by RAGE}
    \frac{1}{\tau_1}\int_{\tau_0}^{\tau_0+\tau_1}\norm{P_N\theta_{m,c}(t)}_{L^2}^2dt\leqslant\frac{\Lambda_N}{20\Lambda_{J}}\norm{\theta_0}_{L^2}^2.
\end{equation}
By elementary inequalities, we further have
\begin{align}
    \norm{(I-P_N)\theta_m^0(t)}_{L^2}^2
    &\geqslant\frac{1}{2}\norm{(I-P_N)\theta_{m,c}(t)}_{L^2}^2-\norm{(I-P_N)\theta_{m,p}(t)}_{L^2}^2\\
    &\geqslant\frac{1}{2}\norm{\theta_{m,c}(t)}_{L^2}^2-\frac{1}{2}\norm{P_N\theta_{m,c}(t)}_{L^2}^2-\norm{\theta_{m,p}(t)}_{L^2}^2.
\end{align}
Combine the facts of the free evolution $e^{i\FL_m t}$ is unitary, $\Lambda_J\geqslant\Lambda_N$, assumptions on $\norm{P_{c}\theta_0}_{L^2}$ and $\norm{P_{p}\theta_0}_{L^2}$, and the estimate \eqref{e:estimate by RAGE}, we have
\begin{equation}
    \frac{1}{\tau_1}\int_{\tau_0}^{\tau_0+\tau_1}\norm{(I-P_N)\theta_m^0(t)}_{L^2}^2 dt\geqslant\frac{1}{10}\norm{\theta_0}_{L^2}^2.
\end{equation}
By using the estimate \eqref{e:difference estimate}, we get
\begin{equation}
      \frac{1}{\tau_1}\int_{\tau_0}^{\tau_0+\tau_1}\norm{(I-P_N)\theta_m^{\epsilon}(t)}_{L^2}^2 dt\geqslant\frac{1}{40}\norm{\theta_0}_{L^2}^2.
\end{equation}
The above estimate further implies 
\begin{equation}
    \int_{\tau_0}^{\tau_0+\tau_1}\norm{\theta_m^{\epsilon}(t)}_{\dot{H}^{\alpha}}^2 dt\geqslant\frac{\Lambda_{J}\tau_1}{40}\norm{\theta_0}_{L^2}^2.
\end{equation}
Combine the above inequality with identity \eqref{e:L2 rate} to get
\begin{equation}
    \norm{\theta_m^{\epsilon}(\tau_0+\tau_1)}_{L^2}^2\leqslant \left(1-\frac{\Lambda_{J}\epsilon\tau_1}{20}\right)\norm{\theta_m^{\epsilon}(\tau_0)}_{L^2}^2\leqslant e^{-\Lambda_{J}\epsilon\tau_1/20}\norm{\theta_m^{\epsilon}(\tau_0)}_{L^2}^2.
\end{equation}
This completes the proof of \eqref{e:the claim} in the first case, where we used the fact that $\Lambda_J\geqslant\Lambda_N$.\\
\textbf{Case II.} Now we assume $\norm{P_p\theta_0}_{L^2}^2\geqslant\frac{1}{4}\norm{\theta_0}_{L^2}^2$. In this case $\theta_0/\norm{\theta_0}_{L^2}\in K_1$, so that we can apply Lemma \ref{lem:H1 mean large}. Thus, by the choice of $J$ and $\tau_1$,
\begin{equation}
\label{e:H1 estimate1}
    \frac{1}{\tau_1}\int_{\tau_0}^{\tau_0+\tau_1}\norm{P_N\theta_{m,p}(t)}_{\dot{H}^{\alpha}}^2dt\geqslant 5\Lambda_{N}\norm{\theta_0}_{L^2}^2.
\end{equation}
Notice that \eqref{e:estimate by RAGE} still holds because of the choice of $\tau_0$ and $\tau_1$, it yields 
\begin{equation}
\label{e:H1 estimate2}
    \frac{1}{\tau_1}\int_{\tau_0}^{\tau_0+\tau_1}\norm{P_N\theta_{m,c}(t)}_{\dot{H}^{\alpha}}^2 dt\leqslant \frac{\Lambda_{N}}{20}\norm{\theta_0}_{L^2}^2.    
\end{equation}
Combine the estimates \eqref{e:H1 estimate1} and \eqref{e:H1 estimate2} we get
\begin{equation}
\label{e:H1 estimate3}
    \frac{1}{\tau_1}\int_{\tau_0}^{\tau_0+\tau_1}\norm{P_N\theta_m^0(t)}_{\dot{H}^{\alpha}}^2 dt\geqslant 2\Lambda_N\norm{\theta_0}_{L^2}^2.
\end{equation}
Then \eqref{e:difference estimate} and \eqref{e:H1 estimate3} give
\begin{equation}
\label{e:viscous H1 lower bd}
    \int_{\tau_0}^{\tau_0+\tau_1}\norm{P_N\theta_m^{\epsilon}(t)}_{\dot{H}^{\alpha}}^2 dt\geqslant \frac{\Lambda_N\tau_1}{2}\norm{\theta_0}_{L^2}^2
\end{equation}
where we also used the fact that $\norm{P_N\theta_m^{\epsilon}-P_N\theta_m^0}_{\dot{H}^{\alpha}}^2\leqslant\Lambda_{J}\norm{\theta_m^{\epsilon}-\theta_m^0}_{L^2}^2$. Then similar to the previous case, \eqref{e:viscous H1 lower bd} implies
\begin{equation}
    \norm{\theta_m^{\epsilon}(\tau_0+\tau_1)}_{L^2}^2\leqslant e^{-\Lambda_N \epsilon\tau_1}\norm{\theta_m^{\epsilon}(\tau_0)}_{L^2}^2,
\end{equation}
which completes the proof of the claim \eqref{e:the claim} in the second case.

Finally, we summarize all the cases. Firstly, we see that if $\norm{\theta_m^{\epsilon}(\tau_0)}_{\dot{H}^{\alpha}}^2\leqslant\Lambda_{N}\norm{\theta_m^{\epsilon}(\tau_0)}_{L^2}^2$, then the decay \eqref{e:the claim} always holds. On the other hand, for any time interval $I=[a,b]$ such that $\norm{\theta_m^{\epsilon}(t)}_{\dot{H}^{\alpha}}^2\geqslant\Lambda_{N}\norm{\theta_m^{\epsilon}(t)}_{L^2}^2$ on $I$, then Lemma \ref{lem:trivial case} yields
\begin{equation}
    \norm{\theta_m^{\epsilon}(b)}_{L^2}^2\leqslant e^{-2\Lambda_N\epsilon(b-a)} \norm{\theta_m^{\epsilon}(a)}_{L^2}^2.
\end{equation}
Combine the two decay factors obtained from \eqref{e:the claim} and the inequality above, also use the the fact $\tau_1<\tau/2\epsilon$, then we can further find $\tau_2\in[\tau/2\epsilon,\tau/\epsilon]$ such that
\begin{equation}
    \norm{\theta_m^{\epsilon}(\tau_2)}_{L^2}^2\leqslant e^{-\Lambda_N\epsilon\tau_2/20}\leqslant e^{-\Lambda_N\tau/40}<1/4
\end{equation}
by the choice of $\Lambda_N$. Then by the monotonicity of the $L^2$ norm \eqref{e:L2 rate} we have $\norm{\theta_m^{\epsilon}(\tau/\epsilon)}_{L^2}\leqslant\norm{\theta_m^{\epsilon}(\tau_2)}_{L^2}<1/2$, this completes the proof. 
\bibliographystyle{plain}
\bibliography{reference}

\end{document}